\newtheorem{theorem}{Theorem}[section]
\newtheorem*{m-theorem1}{Theorem 5.7}
\newtheorem*{m-theorem2}{Theorem 5.14}
\newtheorem*{m-theorem3}{Corollary 5.15}
\newtheorem*{m-theorem4}{Corollary 5.11}
\newtheorem{lemma}[theorem]{Lemma}
\newtheorem{corollary}[theorem]{Corollary}
\newtheorem{proposition}[theorem]{Proposition}
\newtheorem{definition}[theorem]{Definition}
\newtheorem{remark}[theorem]{Remark}
\DeclareMathOperator{\rk}{rk}
\DeclareMathOperator{\gk}{\mathscr{GK}}
\DeclareMathOperator{\s}{\ast}
\DeclareMathOperator{\res}{res}
\DeclareMathOperator{\supp}{Supp}
\DeclareMathOperator{\hm}{Hom}
\DeclareMathOperator{\codim}{co-dim}
\DeclareMathOperator{\aut}{Aut}
\begin{document}
\date{}


\title{The Krull and global dimension of the tensor product of $n$-dimensional quantum tori}

\maketitle

\section{Introduction}
\label{intr}

Let $F$ be a field. The associative $F$-algebra generated by $X_1, \cdots X_n$ together with their inverses satisfying the relations
\begin{equation}
\label{q_c_r}
X_iX_j = \lambda_{ij}X_jX_i \ \ \ \ \ \ \lambda_{ij} \in F\setminus \{0\}  \ \ \ \forall  i, j \in \{1, \cdots, n \}
\end{equation} 
 is known as the quantum torus as it is a quantum deformation of the coordinate algebra of the torus. It plays an important role in non-commutative geometry, quantum groups and group representation theory.  It is  also  interesting in its own right being an example of a noncommutative polynomial algebra.   

The case  $n = 2$ is of special interest. 
Here the defining relation is   $XY = qYX$     
where $q$ is a nonzero scalar in $F$ and we denote the resulting algebra as $B_q$. 
It was shown in \cite{Ja} and \cite{Lo} that when $q$ is not a root of unity, $B_q$ resembles the first Weyl algebra $A_1(k)$ where $k$ is a field of characteric zero.
In \cite{Ja} iterated tensor products of the type 
\[ B_{q_1} \otimes_F \cdots \otimes_F B_{q_k} \] 
were also considered.
 
Returning to the general case we note that the \emph{multiparameters} $\lambda_{ij}$ satisfy the conditions
\[ \lambda_{ii} = 1 = \lambda_{ij}\lambda_{ji}  \ \ \ \ \ \forall i,j \in \{1, \cdots, n\}. \] 

Let $\Lambda := (\lambda_{ij})$. We denote the quantum torus defined in (\ref{q_c_r}) by $P(\Lambda)$.  
We note that the algebras $P(\Lambda)$ are precisely the twisted group algberas $F \s A$ of finitely generated free abelian groups $A$ over $F$. 
In \cite{MP},  J.C. McConnell and J.J. Pettit invesitaged the ring theoretic dimensions of the algebras $P(\Lambda)$. It was shown in this paper that the Krull and global dimensions of these algebras coincide and if $d$ denotes this common value then  $1 \le d \le n$. Henceforth by the dimension of $P(\Lambda)$ we shall mean any of these dimensions.

A criterion (\cite[Corollary 3.8]{MP}) for the exact value of $d$ in terms of certain partial localizations of $P(\Lambda)$ was also given in the same paper. 
It was moreover conjectured that if $F \s A$ is  the underlying twisted group algebra of $P(\Lambda)$, then $d$ is the supremum of the ranks of the subgroups $B \le A$ for which the subalgebra $F \s B$ is commutative. Using the criterion for $d$ mentioned above and a geometric invariant for $F \s A$-modules introduced and studied in \cite{BG1}--\cite{BG3}, this conjecture was shown to be true by C.J.B. Brookes in \cite{Br2}.

 Given quantum tori $F \s A_1$ and $F \s A_2$ we can take their tensor product over $F$ (this turns out to be a twisted group algebra $F \s (A_1 \times A_2)$). The question then arises as to how the dimension of this tensor product is related to the dimensions of the (tensor) factors $F \s A_1$ and $F \s A_2$.  

From the basic properties of tensor product of algebras and the result of Brookes it immediately follows that 
\begin{equation}\label{ineq_0}
 {\dim(F \s A_1 \otimes_F F \s A_2 ) \ge \dim(F \s A_1) + \dim(F \s A_2),}
\end{equation}
where we denote by $\dim(F \s A)$ either the Krull or the global dimension of the algebra $F \s A$. In other words the dimension is in general \emph{super-additive} with respect to tensoring.

It is easy to find examples where equality does not hold in (\ref{ineq_0}). 
As the tensor product $F \s A_1 \otimes_F F \s A_2$ is a twisted group algebra $F \s (A_1 \times A_2)$,
its dimension can be at most $\rk(A_1) + \rk(A_2)$.  

Our first result gives the following upper bound for the dimension of the tensor product assuming that 
the dimension of each algebra $F \s A_i$ is not the maximum possible.

\begin{m-theorem1}
Given algebras $F \s A_1$ and $F \s A_2$ suppose that $\dim(F \s A_i) < \rk(A_i)$ for $i \in \{1,2\}$. 
Let $d := \dim(F \s A_1 \otimes_F F \s A_2)$.
Then
\begin{equation}
\label{ineq_1}
d \le \min\{ \dim(F \s A_1) + \rk(A_2), \dim(F \s A_2) + \rk(A_1)   \} - 1. 
\end{equation}  
\end{m-theorem1}

It may be of interest to know when is the dimension of a tensor product \emph{additive} with respect to tensoring. In this connection, the following corollary can be deduced from Theorem 5.7.

\begin{m-theorem4}
Given twisted group algebras $F \s A_1$ and $F \s A_2$, 
if $\dim(F \s A_i) = \rk(A_i) - 1$ for some $i \in \{1,2 \}$ then 
\[ \dim(F \s A_1 \otimes_F F \s A_2) = \dim(F \s A_1) + \dim(F \s A_2) \]
\end{m-theorem4}

The upper bound of (\ref{ineq_1}) is  the best possible. For example, let
$ \Lambda = (\lambda_{ij})$ be a multiplicatively antisymmetric $n \times n$ matrix  such that the 
subset $\{ \lambda_{ij} \mid 1 \le i < j \le n \}$ of $F$ is multiplicatively independent. By \cite[Corollary 3.10]{MP}, 
$\dim(P_\Lambda) = 1$.    
Let $\Lambda'$ be the transpose of $\Lambda$. Then $\Lambda'$ is also multiplicatively antisymmetric and 
$\dim(P_{\Lambda'}) = 1$ by the same result. 
 
Suppose that $P_{\Lambda'}$ is generated over $F$ by the indeterminates $X_i'$, ($i \in \{1, \cdots, n\}$) together with their inverses. Our choice of the defining multiparameters ensures that the monomials  
\[ \{ X_i \otimes X_i'\mid 1 \le i \le n \} \]   
commute mutually in $P_{\Lambda} \otimes_F P_{\Lambda'}$. But then $\dim(P_{\Lambda} \otimes_F P_{\Lambda'})\ge n$ in view of Brookes' theorem (Theorem \ref{dim_sup}). 
     
Writing $P_{\Lambda}$ as $F \s A$ and similarly $P_{\Lambda'}$ as $A'$, where both $A$ and $A'$ have rank $n$, we have in view of (\ref{ineq_1}) that

\begin{align*}
\dim( P_\Lambda \otimes_F  P_{\Lambda'}) &\le \min\{ \dim(F \s A) + \rk(A'), \dim(F \s A') + \rk(A) \} - 1 \\ 
				       &=  \min\{ 1 + n , 1 + n \} - 1 \\ 
					   &=  n.
\end{align*}

This shows that the upper bound in (\ref{ineq_1}) is the best possible. 
There are also cases in which the upper bound in (\ref{ineq_1}) is not attained by the dimension of the tensor product $F \s A_1 \otimes_F F \s A_2$. Theorem \ref{dim_<_ub} gives sufficient conditions for this.

\begin{m-theorem2}
Let $F \s A_1$ and $F \s A_2$ be twisted group algebras such that
\begin{enumerate}
\item [(i)] $\dim(F \s A_i) \ge 2$,
\item [(ii)] $\rk(A_i) - \dim(F \s A_i) \ge 2$,
\item [(iii)] $F \s A_i$ has center $F$.

\end{enumerate}
Let $d : = \dim(F \s A_1 \otimes F \s A_2)$. Then  
\begin{equation*}
d < \min\{\dim(F \s A_1) + \rk(A_2), \dim(F \s A_2) + \rk(A_1) \} - 1. 
\end{equation*}
\end{m-theorem2}
It is convenient to define codimension of the algebra $F \s A$ as 
\[ \codim(F \s A) = \rk(A) - \dim(F \s A). \] 
We can deduce the following corollary from Theorem 5.14 which gives further cases where the dimension 
is additive with respect to tensor products.

\begin{m-theorem3}
Suppose that the algebras  $F \s A_1$ and $F \s A_2$ satisfy the following conditions
\begin{itemize}
\item[(i)] $\dim(F \s A_1),\dim(F \s A_2) \ge 2$,
\item[(ii)] $\codim(F \s A_1) \ge \codim(F \s A_2) = 2$, 
\item[(iii)] $F \s A_i$ has center $F$.  
\end{itemize}
Then \[ \dim(F \s A_1 \otimes F \s A_2) = \dim(F \s A_1) + \dim(F \s A_2) \] 
\end{m-theorem3}

This article is organised as follows. We begin by reviewing twisted group algebras $F \s A$ and their localizations in Section \ref{t_g_a}. 
In Section \ref{b_str_mod} some facts on $F \s A$-modules are recalled. Section 4 gives
 an exposition of a geometric invariant of Brookes and Groves for $F \s A$-modules which we use in our 
invesgtigations The problem of the dimension of tensor products is discussed in Section 5. 
    
\section{Twisted group algebras and crossed products}
\label{t_g_a}

Let $F^* := F \setminus \{0\}$. Let $A$ denote a finitely generated free abelian group. 
We denote by $\rk(A)$ the rank of $A$. 
An $F$-algebra $\mathcal A$ is a \emph{twisted group algebra} $F \s A$ of $A$ over $F$ 
if $\mathcal A$ has a copy $\overline A := \{ \bar a : a \in A \}$ of $A$ which is an $F$-basis so that the multiplication in 
$\mathcal A$ satisfies  
\begin{equation}
\label{tws}
\bar {a}_1 \bar {a}_2 = \tau(a_1, a_2)\overline{a_1a_2} \ \ \ \ \ \forall a_1, a_2 \in A,
\end{equation} 
where $\tau : A \times A \rightarrow F^*$ is a function satisfying
\[ \tau(a_1, a_2)\tau(a_1a_2, a_3) = \tau(a_2, a_3)\tau(a_1, a_2a_3) \ \ \ \ \ \forall a_1, a_2, a_3 \in A.  \]
For $a_1, a_2 \in A$, it easily follows from (\ref{tws}) that the group-theoretic commutator $[\bar a_1, \bar a_2] \in F^*$. 
The following identities thus follow from the
basic properties of commutators (see, for example, \cite[Section 5.1.5]{Ro}):
\begin{align}\label{com_id_1}
[\bar {a}_1 \bar {a}_2, \bar {a}_3] &= [\bar {a}_1, \bar {a}_3] [\bar {a}_2, \bar {a}_3],\\ \label{com_id_2}
[\bar {a}_1,  \bar {a}_2 \bar {a}_3] &= [\bar {a}_1, \bar {a}_2] [\bar {a}_1, \bar {a}_3], \\ \label{com_id_3}
[\bar {a}_1, \bar {a}_2^{-1}] &= [\bar {a}_1, \bar {a}_2]^{-1}, \\ \label{com_id_4} 
[\bar {a}_1^{-1}, \bar {a}_2] &= [\bar {a}_1, \bar {a}_2]^{-1}
\end{align}
For a subset $X$ of $A$, we define $\overline{X} = \{\bar x : x \in X \}$. 
If $X, Y \subset A$, we set 
\[ [\overline X, \overline Y] = \langle [\bar x, \bar y] : x \in X, y \in Y \rangle. \]
Thus $[\overline X, \overline Y]$ is a subgroup of $F^*$. If $\alpha \in F \s A$, we may express $\alpha = \sum_{a \in A} \lambda_a \bar a$, where $\lambda_a \in F$ and  $\lambda_a = 0$ for ``almost all" $a \in A$.
We define the support of $\alpha$ (in $A$) as  
\[ \supp(\alpha) = \{ a \in A : \lambda_a \ne 0 \}.\]
Note that for a subgroup $B$ of $A$, the subalgebra generated by $\overline B \subset F \s A$ is a twisted group algebra $F \s B$. 
It was shown in \cite[Proposition 1.3]{MP} that an algebra $F \s A$ is simple if and only if it has center $F$. 
\begin{proposition}
\label{c_a_fis}
An algebra $F \s A$ has center exactly $F$ if and only if for each subgroup $A_1 < A$ with finite index 
$F \s A_1$ has center $F$.
\end{proposition}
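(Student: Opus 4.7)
The plan is first to pin down what the centre of any twisted group algebra $F \s B$ of a subgroup $B \le A$ actually looks like, and then to promote a non-scalar central element of $F \s A_1$ to one of $F \s A$ by passing to a suitable power. For the first step, I would write $\alpha = \sum_{b \in B} \lambda_b \bar b$ and compute $\bar c \alpha \bar c^{-1} = \sum_b \lambda_b [\bar c, \bar b]\bar b$ using $\bar c \bar b \bar c^{-1} = [\bar c, \bar b]\bar b$; since $\overline B$ is an $F$-basis of $F \s B$, $\alpha$ is central iff each $b \in \supp(\alpha)$ lies in $Z_B(B) := \{b \in B : [\bar c, \bar b] = 1 \ \forall c \in B\}$. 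Thus $Z(F \s B) = F$ iff $Z_B(B)$ is trivial, reducing the claim to a purely group-theoretic statement about $A$ and its finite-index subgroups.

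The reverse implication is immediate when $A_1 = A$ is permitted; if ``$A_1 < A$'' is intended strictly, then given a nontrivial $c \in Z_A(A)$ one produces a proper finite-index subgroup of $A$ containing $c$ by writing $c = kc'$ for a primitive $c'$, extending $c'$ to a basis of $A$, and halving one of the complementary cyclic summands (the rank-$\le 1$ case is trivial, $F \s A$ then being commutative).

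The forward implication is the substance of the proposition and I would argue it contrapositively. Assume a finite-index subgroup $A_1 \le A$ has $Z(F \s A_1) \ne F$ and pick $c \in Z_{A_1}(A_1) \setminus \{1\}$. The key idea is to exploit the map $\phi : A \to F^*$, $a \mapsto [\bar c, \bar a]$, which is a group homomorphism by identity (\ref{com_id_2}). Since $A_1 \subseteq \ker \phi$, the image of $\phi$ is a finite subgroup of $F^*$ and hence cyclic of some exponent $N$. Then $[\bar c^N, \bar a] = \phi(a)^N = 1$ for every $a \in A$, so $\bar c^N \in Z(F \s A)$; and because $A$ is torsion-free, $c^N \ne 1$, so $\bar c^N$ is a nonzero scalar multiple of the basis element $\overline{c^N}$ and does not lie in $F$. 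This yields $Z(F \s A) \ne F$, as required.

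The main subtlety is precisely that the original central element $\bar c \in F \s A_1$ is in general \emph{not} central in the larger algebra $F \s A$: conjugation by $\bar a$ for $a \notin A_1$ can scale $\bar c$ by a nontrivial root of unity. The finite-index hypothesis is used exactly to bound the order of these roots of unity uniformly through the finite image of $\phi$, which then lets one recover centrality by replacing $\bar c$ with a single global power.
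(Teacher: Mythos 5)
Your proof is correct and takes essentially the same route as the paper: both arguments take a non-scalar central monomial $\bar c$ of $F \s A_1$ and raise it to a power that kills the finite quotient $A/A_1$ (the paper uses the index $l$ directly via $[\bar c^{\,l}, \bar a] = [\bar c, \bar a]^l = [\bar c, \bar a^l] = 1$, while you use the exponent of the finite image of the homomorphism $a \mapsto [\bar c, \bar a]$, a divisor of that index), producing a non-scalar central element of $F \s A$. The only differences are cosmetic: you derive the monomial form of central elements by a direct support computation where the paper cites \cite[Proposition 1.3]{MP}, and you spell out the converse that the paper calls ``clear.''
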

\begin{proof}
Suppose that $F \s A$ has center $F$. Let $A_1 \le A$ be a subgroup such that $l : = [A : A_1] < \infty$. 
We claim that $F \s A_1$ also has center $F$. Using \cite[Proposition 1.3]{MP}, we may assume that $\bar {a}_1$ is central in $F \s A_1$ for $1 \ne a_1 \in A_1$. For any $a \in A$, (\ref{com_id_1}) and (\ref{com_id_2}) yield:
\[ [ \bar {a}_1^l, \bar a ] = [ \bar {a}_1, \bar a ]^l =  [ \bar {a}_1, \bar a^l ] = 1, \] 
where the last equality holds since $a^l \in A_1$. 
Since $A$ is torsion-free by definition, $1 \ne a_1^l$. Thus $\bar {a}_1^l$ is a nonscalar central element of $F \s A$. 
The converse is clear. 
\end{proof}

It is well known (see, for example, \cite[Lemma 37.8]{Pa2}) that for each subgroup $B \le A$, 
\[ \mathcal S_B : = F \s B \setminus \{0\} \] 
is an Ore subset in $F \s A$. 
Thus $F \s A$ has a localization $(F \s A)S_B^{-1}$ at $S_B$ which has the structure of a \emph{crossed product} 
$D_B \s A/B$ of $A/B$ over the quotient division ring $D_B$ of $F \s B$.

A crossed product $D \s C$ of an abelian group $C$ over a division ring $D$ is an associative ring which has a 
a copy $\overline C : = \{\bar c \mid c \in C \}$ of $C$ as $D$-basis with 
the multiplication satisfying 
\begin{align*}
\bar {c}_1 \bar {c}_2 &= \tau (c_1, c_2)\overline {c_1c_2} \ \ \ \ \ \  \forall c_1, c_2 \in C, \\  
\bar {c} d &= \sigma_c(d) \bar c, \ \ \ \ \ \ \ \ \ \ \ \ \ \forall c \in C, \ \ \forall  d \in D, 
\end{align*}
for a suitable function  $\tau: C \times C \rightarrow D^*$ and automorphisms $\sigma_c \in \aut(D)$. 
We note that twisted group algbras are special types of crossed products arising when the division ring $D$ lies in the center. We refer to \cite{Pa2} for further details on crossed products.

\section{GK dimension and critical  modules}\label{b_str_mod}

All modules that we shall consider shall be right modules.
In our investigations we shall have to consider modules over suitable  localizations of the algebras $F \s A$ as just discussed.
Thus we shall now briefly discuss the basic module theory of crossed products $D \s A$, where $D$ is a division ring.

Let $M$ be a finitely generated $D \s A$-module and $B \le A$ be a subgroup.  
It is known that $\mathcal S_B : = D \s B \setminus \{0\}$ is an Ore subset (e.g.,\cite[Lemmma 37.8]{Pa2}) 
and it easily follows from this that
 \[ T_B(M) : = \{ m \in M \mid  m\beta = 0 \ \mbox{for some}\ \beta \in \mathcal S_B \}, \]   
is an $F \s A$-submodule of $M$ known as the $F \s B$-torsion submodule of $M$. 
We shall say that $M$ is $F \s B$-torsion-free if $T_B(M) = 0$ and 
$F \s B$-torsion if $T_B(M) = M$. 

In \cite{BG2}, a dimension for $D \s A$-modules was introduced and was shown to coincide with the standard Gelfand-Kirillov dimension (GK dimension) measured over $D$. The next proposition characterizes the GK dimension for finitely generated $D \s A$-modules.

\begin{proposition}[Brookes and Groves]
\label{GKdim_ch} 
Let $M$ be a finitely generated $D \s A$-module. Then $\gk(M)$ equals the supremum of the ranks of subgroups 
$B$ of $A$ such that $M$ is not torsion as $D \s B$-module. Furthermore, let $a_1, \cdots, a_n$ 
freely generate $A$ and define 
\[ \mathcal F = \{ \langle X \rangle : X \subset \{a_1, \cdots, a_n \} \} \]
with the convention that $\langle \emptyset \rangle = \langle 1 \rangle $. 
Then $\gk(M)$ is simply the supremum of the ranks of subgroups $B$ 
in $\mathcal F$ such that $M$ is not $D \s B$-torsion.   
\end{proposition}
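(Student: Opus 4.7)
Write $d(M) := \sup\{\rk(B) : B \le A,\ M \text{ is not } D\s B\text{-torsion}\}$ and let $d_{\mathcal F}(M)$ denote the same supremum restricted to $B \in \mathcal F$. Since $\mathcal F$ consists of subgroups of $A$, $d_{\mathcal F}(M) \le d(M)$, so both claims of the proposition follow from the chain $d(M) \le \gk(M) \le d_{\mathcal F}(M)$. For the inequality $d(M) \le \gk(M)$, suppose $B \le A$ has rank $r$ and $M$ is not $D\s B$-torsion. Since $D\s B$ is a Noetherian Ore domain (its division ring of fractions exists by \cite[Lemma~37.8]{Pa2}), any $m \in M \setminus T_B(M)$ satisfies $\operatorname{ann}_{D\s B}(m) = 0$. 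Hence $m \cdot (D\s B) \subseteq M$ is free of rank one as a right $D\s B$-module, and monotonicity of GK dimension under submodules, combined with the standard fact $\gk_D(D\s B) = \rk(B)$ for crossed products of free abelian groups over division rings, gives $\gk(M) \ge r$; taking the supremum yields the lower bound.

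\textbf{Inductive strategy for the upper bound.} To prove $\gk(M) \le d_{\mathcal F}(M)$ I induct on $n = \rk(A)$, the case $n = 0$ being immediate. For the inductive step, set $A' := \langle a_1,\ldots,a_{n-1}\rangle$ and consider the short exact sequence
\[
0 \to T_{\langle a_n\rangle}(M) \to M \to N \to 0.
\]
By the exactness of GK dimension for finitely generated modules over a Noetherian algebra, $\gk(M) = \max\bigl(\gk(T_{\langle a_n\rangle}(M)),\gk(N)\bigr)$. For the torsion submodule I would use the right $D\s A'$-module decomposition $D\s A = \bigoplus_j \bar a_n^j(D\s A')$: an annihilating Laurent polynomial in $\bar a_n^{\pm 1}$ for each of finitely many generators bounds the range of $j$ needed, so $T_{\langle a_n\rangle}(M)$ is finitely generated as a $D\s A'$-module, and the induction hypothesis gives $\gk(T_{\langle a_n\rangle}(M)) \le d_{\mathcal F(A')}(T_{\langle a_n\rangle}(M)) \le d_{\mathcal F}(M)$. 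For the $D\s\langle a_n\rangle$-torsion-free quotient $N$, localize at $\mathcal S_{\langle a_n\rangle}$ to obtain a finitely generated module $\widetilde N$ over $\widetilde D\s A'$, where $\widetilde D$ is the quotient division ring of $D\s\langle a_n\rangle$. Torsion-freeness gives an injection $N \hookrightarrow \widetilde N$, and a routine Ore-denominator calculation shows that, for $B' \in \mathcal F(A')$, $\widetilde N$ is $\widetilde D\s B'$-torsion if and only if $N$ is $D\s\langle B',a_n\rangle$-torsion, where $\langle B',a_n\rangle \in \mathcal F(A)$ has rank $\rk(B')+1$. Applying the induction hypothesis to $\widetilde N$ over $\widetilde D\s A'$ and combining with the estimate $\gk_D(N) \le \gk_{\widetilde D}(\widetilde N) + 1$ then completes the bound.

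\textbf{Main obstacle.} The technical crux is justifying the inequality $\gk_D(N) \le \gk_{\widetilde D}(\widetilde N) + 1$ used in the torsion-free case. Verifying it requires comparing the $D$-growth of $N$ relative to the standard filtration on $D\s A$ (by total $\bar a_i^{\pm 1}$-degree) with the $\widetilde D$-growth of $\widetilde N$ relative to the induced filtration on $\widetilde D\s A'$, and showing that the $\bar a_n$-direction contributes a polynomial factor of degree exactly one to the Hilbert growth. This demands careful bookkeeping of how $\bar a_n$-denominators interact with the chosen filtration and a choice of generating set for $\widetilde N$ over $\widetilde D\s A'$ whose pullback controls $N$ with bounded $\bar a_n$-exponents; this comparison is the heart of the Brookes-Groves identification of their module-theoretic dimension with GK dimension, and everything else in the induction is essentially formal once it is in hand.
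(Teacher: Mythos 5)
Your argument is not complete, and the missing piece is not a peripheral technicality: everything hinges on the inequality $\gk_D(N) \le \gk_{\widetilde D}(\widetilde N) + 1$ for the $D\s\langle a_n\rangle$-torsion-free quotient, which you explicitly defer in your final paragraph. That inequality \emph{is} the substance of the proposition. Once one knows that localizing at one Laurent direction lowers the $D$-measured growth by at most one (and this must genuinely use torsion-freeness, since for a torsion module $\widetilde N=0$ and the inequality fails), the induction you set up, the Ore-denominator correspondence between $\widetilde D\s B'$-torsion and $D\s\langle B',a_n\rangle$-torsion, and the splitting off of $T_{\langle a_n\rangle}(M)$ are, as you yourself say, essentially formal. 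So what you have produced is a reduction of the Brookes--Groves characterization to its own crux, not a proof of it. Note also that the paper offers no proof to compare against: this proposition is quoted from \cite{BG2} (it is part of their identification of the dimension introduced there with GK dimension), so to make your write-up self-contained you would either have to carry out the filtration/denominator comparison you sketch, or simply cite \cite{BG2} as the paper does.

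Two smaller points. First, ``exactness of GK dimension for finitely generated modules over a Noetherian algebra'' is not a valid general principle --- GK dimension fails to be exact for some Noetherian algebras --- so the exact-sequence step must instead invoke the specific exactness for crossed products $D \s A$, i.e.\ Proposition \ref{GKdim_prop} (which again comes from \cite[Lemma 2.2]{BG2}); the same remark applies to the monotonicity you use in the lower bound, where the submodule $m\cdot D\s B$ is a module over the subalgebra $D\s B$, not over $D\s A$. Second, in the torsion branch you apply the induction hypothesis to $T_{\langle a_n\rangle}(M)$ viewed as a $D\s A'$-module, but the quantity you need to bound is its GK dimension as a $D\s A$-module over $D$; these agree because the annihilating Laurent polynomials confine the $\bar a_n$-exponents to a bounded window (so the $D\s A$-filtration and the $D\s A'$-filtration of a suitable finite generating set have comparable growth), but this identification should be stated and justified rather than passed over silently.
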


\begin{remark}\label{loc_rmk}
Let us note some consequences. Let $\{x_1, \cdots, x_n \}$ be a basis of $A$. 
Let $M$ be a finitely generated $F \s A$-module with $\gk(M) = r$, where $0 < r < \rk(A)$.
By the preceding proposition, there must be a nonempty subset 
$\{ i_1, i_2, \cdots, i_r \} \subset \{1, 2, \cdots, n \}$ 
so that $M$ is not 
$F \s \langle  x_{i_1}, x_{i_2}, \cdots, x_{i_r} \rangle$-torsion. 
Let $\{j_1, \cdots, j_{n - r} \}$ be the (set) complement to $\{i_1, i_2, \cdots, i_r \}$ in $\{1, \cdots, n \}$. 
and set $\mathcal S : = F \s \langle  x_{i_1}, x_{i_2}, \cdots, x_{i_r} \rangle \setminus \{0\}$. 
As already noted, $S$ is an Ore subset in $F \s A$ and the localization $(F \s A)S^{-1}$ is a crossed product 
$D \s \langle  x_{j_1}, \cdots, x_{j_{n - r}} \rangle$, where $D$ is the quotient division ring of $F \s \langle  x_{i_1}, x_{i_2}, \cdots, x_{i_r} \rangle$. For convenience, we can assume that $\{i_1, i_2, \cdots, i_r\} = \{1, 2, \cdots, r\}$ 
\end{remark}

Following the notation in \cite{MP}, we shall write $F( i_1, i_2, \cdots, i_r)$ 
for $D$ and denote $D \s \langle  x_{j_1}, \cdots, x_{j_{n - r}}  \rangle$ as 
\[ F( x_{i_1}, x_{i_2}, \cdots, x_{i_r})[ x_{j_1}, \cdots, x_{j_{n - r}} ].\] 

\begin{lemma}\label{fin_D_dim}
With the above notation, $M \mathcal S^{-1}$ is a nonzero $D \s  \langle x_{j_i}, \cdots, x_{j_{n - r}}  \rangle$-module 
that is finite dimensional as $D$-vector space. 
\end{lemma}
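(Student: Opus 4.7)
The plan is to handle the two claims separately, with the bulk of the work going into the finite-dimensionality.

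For nontriviality, the hypothesis that $M$ is not $F\s\langle x_{i_1},\ldots,x_{i_r}\rangle$-torsion says exactly that $T_{\langle x_{i_1},\ldots,x_{i_r}\rangle}(M)$ is a proper submodule of $M$. Any $m\in M$ outside it satisfies $m\beta\neq 0$ for every $\beta\in\mathcal{S}$, so its image $m/1$ in $M\mathcal{S}^{-1}$ is nonzero.

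For the bound $\dim_D M\mathcal{S}^{-1}<\infty$, set $N:=M\mathcal{S}^{-1}$ and $G:=\langle x_{j_1},\ldots,x_{j_{n-r}}\rangle$, so that $N$ is a finitely generated right module over the crossed product $D\s G$. I would show $\gk(N)=0$; since $N$ is finitely generated and the Brookes--Groves GK dimension coincides with the standard Gelfand--Kirillov dimension over $D$ (as recalled in Section~\ref{b_str_mod}), this forces $N$ to be finite dimensional over $D$. By Proposition~\ref{GKdim_ch} it suffices to prove that $N$ is $D\s C$-torsion for every subgroup $C\leq G$ generated by a subset of $\{x_{j_1},\ldots,x_{j_{n-r}}\}$. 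Argue by contradiction. Suppose some such $C$ of rank $s\geq 1$ has $N$ not $D\s C$-torsion, and pick $\mu\in N$ with $\mu\gamma\neq 0$ for every nonzero $\gamma\in D\s C$. Write $\mu=m/t$ with $m\in M$ and $t\in\mathcal{S}$. Because $t$ is a unit of $D$, the element $m/1=\mu\cdot t$ is also not annihilated by any nonzero $\gamma\in D\s C$: indeed $(m/1)\gamma=\mu(t\gamma)$, and $t\gamma\neq 0$ in $D\s C$ since $D\s C$ is a domain and $t\in D\setminus\{0\}$. Now set $B':=\langle x_{i_1},\ldots,x_{i_r}\rangle\oplus C\leq A$, a subgroup of rank $r+s$. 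Every nonzero $\beta\in F\s B'$ admits a unique expansion $\beta=\sum_{c\in C}\beta_c\bar c$ with $\beta_c\in F\s\langle x_{i_1},\ldots,x_{i_r}\rangle\subset D$. Since $F\s A$ is an Ore domain it embeds into its localization, and under this embedding the expansion above realises $\beta$ as a nonzero element of $D\s C\subset D\s G$. Hence $(m/1)\beta\neq 0$ in $N$, so $m\beta\neq 0$ in $M$. Thus $M$ is not $F\s B'$-torsion, and Proposition~\ref{GKdim_ch} yields $\gk(M)\geq r+s>r$, contradicting $\gk(M)=r$.

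The main obstacle is the compatibility of the decomposition $B'=\langle x_{i_1},\ldots,x_{i_r}\rangle\oplus C$ with the crossed-product structure of $(F\s A)\mathcal{S}^{-1}=D\s G$: one has to verify that the subalgebra $F\s B'\subset F\s A$ is sent under localization precisely into $D\s C$, with $F\s\langle x_{i_1},\ldots,x_{i_r}\rangle$ absorbed as $D$-coefficients, and that nonvanishing is preserved across the two descriptions of $\beta$. This is only a book-keeping matter given the setup of Section~\ref{t_g_a}, but it is the point where the argument is most liable to slip.
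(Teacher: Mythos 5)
Your proof is correct and follows essentially the same route as the paper: both reduce the statement to showing that $M\mathcal S^{-1}$ has GK dimension zero over $D$ (hence is finite dimensional over $D$) and verify this via Proposition~\ref{GKdim_ch} by transferring torsion information across the Ore localization. The only difference is one of direction: the paper argues directly, observing that $\gk(M)=r$ forces $M$ to be $F \s \langle x_{i_1},\cdots,x_{i_r},x_{j_l}\rangle$-torsion and then exhibiting the annihilator $s\alpha_l \in D \s \langle x_{j_l}\rangle$ of a general element $\phi(m)s^{-1}$, whereas you run the contrapositive, pulling a non-torsion element of $M\mathcal S^{-1}$ back to produce a rank-$(r+s)$ subgroup of $A$ over which $M$ is not torsion -- logically equivalent bookkeeping.
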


\begin{proof}
By  \cite[Lemma 2.2(3)]{BG2}, if the GK dimension of $M \mathcal S^{-1}$ (over $D$) is $0$, then $MS^{-1}$ is finite dimensional as $D$-space.
In view of Proposition \ref{GKdim_ch} it thus suffices to show that $M \mathcal S^{-1}$ is $D \s \langle x_{j_l} \rangle$-torsion for all $l \in \{1, \cdots, n - r \}$.
We set $A_l = \langle x_{i_1}, x_{i_2}, \cdots, x_{i_r}, x_{j_l} \rangle$.
In view of Proposition \ref{GKdim_ch}, $M$ is $F \s A_l$-torsion. 
Thus if $m \in M$ then $m\alpha_l = 0$ for some nonzero $\alpha_l \in F \s A_l$. If $\phi: M  \rightarrow M \mathcal  S^{-1}$ is the cannonical homomorphism then 
each element of $M \mathcal S^{-1}$ may be expressed as $\phi(m)s^{-1}$ for some $m \in M$ and $s \in S$. But then 
\begin{align*} 
\phi(m)s^{-1}(s\alpha_l) &= \phi(m)(s^{-1}s\alpha_l) \\ 
                           &= \phi(m)\alpha_l \\ 
			   &=  \phi(m\alpha_l) \\
			   &=  \phi(0) \\
                           &= 0.  
\end{align*} 
Now $s\alpha_l \in F \s A_l \subset D \s \langle x_{J_l} \rangle$. 
Hence $M\mathcal S^{-1}$ is $D \s \langle x_{J_l} \rangle$-torsion.   
The lemma now follows.
\end{proof}

We next note a useful property of the GK dimension for $D \s A$-modules was shown in \cite[
Lemma 2.2]{BG2}. 

\begin{proposition}[Brookes and Groves]\label{GKdim_prop}
Let $M$ be a finitely generated $D \s A$-module. If $B \le A$ is a subgroup and $N$ is a finitely generated $D \s B$-submodule of $M$, then 
$\gk(N) \le \gk(M)$. Furthermore, if $0 \rightarrow M' \rightarrow M \rightarrow M'' \rightarrow 0$ is an exact sequence of $D \s A$-modules, then 
\[ \gk(M) = \sup\{ \gk(M'), \gk(M'') \}. \]  
\end{proposition}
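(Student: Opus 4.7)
The plan is to reduce every claim to the characterization of GK dimension in Proposition \ref{GKdim_ch}: for a finitely generated $D \s A$-module $L$, $\gk(L)$ equals the supremum of the ranks of subgroups $C \le A$ for which $L$ is \emph{not} $D \s C$-torsion, equivalently for which there exists $\ell \in L$ with $\ell \beta \ne 0$ for every nonzero $\beta \in D \s C$. Once this criterion is in hand, each assertion becomes a statement about which subgroups admit such witnesses in the various modules.

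For the submodule statement, I would apply Proposition \ref{GKdim_ch} with $B$ in place of $A$: pick $C \le B$ with $\rk(C) = \gk(N)$ and $N$ not $D \s C$-torsion, witnessed by some $n \in N$. Since $N \subseteq M$ and $C \le A$, the same element $n$ witnesses that $M$ is not $D \s C$-torsion, so Proposition \ref{GKdim_ch} applied to $M$ yields $\gk(M) \ge \rk(C) = \gk(N)$.

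For the short exact sequence I first note that $D \s A$ is Noetherian (crossed products of polycyclic-by-finite groups over division rings are Noetherian), so $M'$ and $M''$ are finitely generated and their GK dimensions are defined. The inequality $\gk(M') \le \gk(M)$ is the submodule case with $B = A$. For $\gk(M'') \le \gk(M)$, if $\bar m \in M''$ witnesses that $M''$ is not $D \s C$-torsion, any lift $m \in M$ satisfies $m \beta \ne 0$ for every nonzero $\beta \in D \s C$---otherwise projecting gives $\bar m \beta = 0$---so $M$ is not $D \s C$-torsion and $\gk(M) \ge \rk(C)$.

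The remaining direction $\gk(M) \le \sup\{\gk(M'), \gk(M'')\}$ I would prove contrapositively: if both $M'$ and $M''$ are $D \s C$-torsion, so is $M$. Given $m \in M$, the torsion of $M''$ produces a nonzero $\alpha \in D \s C$ with $m \alpha \in M'$, and the torsion of $M'$ then produces a nonzero $\gamma \in D \s C$ with $m(\alpha \gamma) = 0$. The point to check is $\alpha \gamma \ne 0$, and this is the only step requiring more than Proposition \ref{GKdim_ch}: it holds because $D \s C \setminus \{0\}$ is an Ore set in $D \s A$, so $D \s C$ is a domain. Everything else reduces to routine bookkeeping with witnesses and their lifts through the projection $M \to M''$.
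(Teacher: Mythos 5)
Your argument is correct, but it is not the paper's argument: the paper offers no proof of this proposition at all, quoting it directly as Lemma~2.2 of Brookes--Groves [BG2]. What you have done instead is derive it from the other quoted Brookes--Groves result, Proposition~\ref{GKdim_ch} (the torsion-theoretic characterization of $\gk$), and within the paper's framework that derivation is sound: the witness-element bookkeeping for the submodule case and for $\gk(M'),\gk(M'')\le\gk(M)$ is exactly right; the Noetherian remark is needed and correctly supplied so that $M'$ and $M''$ are finitely generated and Proposition~\ref{GKdim_ch} applies to them; and the key closure statement -- that an extension of $D \s C$-torsion modules is $D \s C$-torsion -- does come down to $\alpha\gamma\ne 0$, which holds because $D \s C$ is a domain (your route via the multiplicative closedness of the Ore set $D \s C\setminus\{0\}$ is legitimate, though it is perhaps cleaner to say that $D \s C$ is an iterated skew Laurent extension of the division ring $D$, hence a domain). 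Two remarks on what the two approaches buy. First, your derivation is only as strong as Proposition~\ref{GKdim_ch}: in the original source the logical order is reversed (the exactness and submodule properties of the dimension are established first and feed into the torsion characterization), so your argument verifies consistency of the quoted statements rather than reconstructing the Brookes--Groves proof; as a reader's check inside this paper, where Proposition~\ref{GKdim_ch} is taken as a black box, that is perfectly acceptable. Second, a cosmetic point: in the submodule case one should note that the supremum in Proposition~\ref{GKdim_ch} is attained (the ranks involved are integers in $\{0,\dots,\rk(B)\}$, and the trivial subgroup always qualifies for nonzero $N$ since $D$ is a division ring), which is what licenses picking the subgroup $C$ with $\rk(C)=\gk(N)$; this is implicit in your write-up and worth one sentence.
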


The GK dimension of a $D \s A$-module does not change in passing to a subgroup of finite index.  

\begin{proposition}\label{fi_sg}
Let $M$ be a finitely generated $D \s A$-module with $\gk(M) = r$, where $0 \le r \le \rk(A)$. 
If $A' < A$ is a subgroup of finite index in $A$ and $M'$ is the $D \s A'$-module structure on $M$, then 
$\gk(M') = r$.   
\end{proposition}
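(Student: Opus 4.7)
The plan is to verify both inequalities $\gk(M') \le r$ and $\gk(M') \ge r$ using the subgroup characterization of GK dimension from Proposition \ref{GKdim_ch}. First I would observe that the hypotheses of that proposition are met on the $A'$-side: since $A'$ has finite index in $A$, the ring $D \s A$ is a free $D \s A'$-module of rank $[A:A']$, so $M'$ remains finitely generated as a $D \s A'$-module. Also, $A'$ is itself a free abelian group of rank $\rk(A)$.

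For the upper bound, if $B' \le A'$ is any subgroup such that $M$ is not $D \s B'$-torsion, then $B'$ is in particular a subgroup of $A$, and the characterization of $\gk(M)$ applied inside $A$ forces $\rk(B') \le r$. Taking the supremum over such $B'$ yields $\gk(M') \le r$.

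For the lower bound, pick a subgroup $B \le A$ of rank $r$ witnessing $\gk(M) = r$, so that $M$ is not $D \s B$-torsion. Set $B' := B \cap A'$. Because $[A:A'] < \infty$, the index $[B : B']$ is also finite, so $\rk(B') = \rk(B) = r$. The key point is then that $B' \le B$ implies $D \s B' \subseteq D \s B$, hence the torsion submodules satisfy $T_{B'}(M) \subseteq T_B(M)$: any element annihilated by a nonzero element of $D \s B'$ is already annihilated by that same nonzero element viewed inside $D \s B$. Since $T_B(M) \ne M$ by the choice of $B$, we get $T_{B'}(M) \ne M$, i.e. $M$ is not $D \s B'$-torsion. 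Applying the characterization of $\gk(M')$ on the $A'$-side to the subgroup $B' \le A'$ of rank $r$ gives $\gk(M') \ge r$, completing the argument.

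No real obstacle is anticipated; the only thing to be careful about is the monotonicity $T_{B'}(M) \subseteq T_B(M)$ and the fact that intersecting a rank-$r$ subgroup of $A$ with a finite-index subgroup preserves the rank, both of which are elementary.
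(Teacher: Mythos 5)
Your proof is correct and follows essentially the same route as the paper: the lower bound is obtained exactly as in the paper's proof, by intersecting a rank-$r$ witness $B$ with $A'$ and noting that the finite index preserves the rank while $T_{B'}(M)\subseteq T_B(M)$ keeps $M$ non-torsion over $D \s B'$. The only cosmetic difference is in the upper bound, where you reuse the subgroup characterization of Proposition \ref{GKdim_ch} instead of citing the submodule monotonicity of Proposition \ref{GKdim_prop} as the paper does; both are immediate, and your added remark that $M'$ stays finitely generated over $D \s A'$ (since $[A:A']<\infty$) is a point the paper leaves implicit.
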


\begin{proof}
By Proposition \ref{GKdim_ch}, $A$ has a subgroup $B$ with rank $r$ so that $M$ is not $D \s B$-torsion. Set $B' = A' \cap B$. 
By the hypothesis in the proposition,
$[A :                                                                                                                                                                                                                                                                                                                                                                                                                                                                           A'] < \infty$ and hence $\rk(B') = r$.
Clearly $M'$ is not torsion as $D \s B'$-module. 
Hence $\gk(M') \ge r$ by Proposition \ref{GKdim_ch}.
Moreover $\gk(M') \le r$ by Proposition \ref{GKdim_prop}.
\end{proof}

In considering $D \s A$-modules, it is often useful to pass to a \emph{critical} submodule.
Critical $D \s A$ modules were introduced in \cite{BG2} and 
certain facts concerning them were also established (see \cite[Section 2]{BG2}) 
that account for their usefulness.  

\begin{definition}\label{def_cri_mod}
A nonzero $D \s A$-module $M$ is \emph{critcal} if for any nonzero proper submodule $N$, $\gk(M) > \gk(M/N)$. 
\end{definition}

\begin{proposition}[Proposition 2.5 of \cite{BG2}]\label{crit_mod}
Every nonzero $D \s A$-module contains a critical submodule. 
\end{proposition}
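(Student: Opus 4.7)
The plan is to reduce to $M$ finitely generated and then induct on $\gk(M)$, exploiting the Noetherian structure of $D \s A$. Any nonzero cyclic submodule $C \subseteq M$ is Noetherian (since $D \s A$ is a Noetherian ring), and a critical submodule of $C$ is automatically a critical submodule of $M$. So I may replace $M$ by such a $C$ and assume $M$ is cyclic.

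The base case $\gk(M) = 0$ follows from (the proof of) Lemma \ref{fin_D_dim}: $M$ is then finite-dimensional over $D$, hence of finite length, so a simple submodule exists and is vacuously critical. For the inductive step, write $d := \gk(M) \ge 1$ and first ask whether some nonzero submodule $N \subseteq M$ has $\gk(N) < d$. If so, the inductive hypothesis applied to $N$ yields a critical submodule of $N$, hence of $M$.

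The remaining case is when every nonzero submodule of $M$ has GK dimension exactly $d$. Here I would use the ascending chain condition on $M$: the family $\mathcal{F} = \{K \subsetneq M : \gk(M/K) = d\}$ is nonempty (since it contains $0$), so it possesses a maximal element $K^*$. Maximality forces the quotient $M/K^*$ to be critical, because for any $K^* \subsetneq K \subsetneq M$ the condition $K \notin \mathcal{F}$ gives $\gk(M/K) < d$. If $K^* = 0$ then $M$ itself is critical; otherwise, the main obstacle is promoting this critical quotient to a critical submodule of $M$. My strategy is to invoke Proposition \ref{GKdim_ch} for $M/K^*$: a rank-$d$ witness subgroup $B \le A$ for which $M/K^*$ is not $D\s B$-torsion furnishes an $m \in M$ with $mD\s B \cap K^* = 0$, and the aim is to upgrade $m$ to a cyclic $D\s A$-submodule $C \subseteq M$ satisfying $C \cap K^* = 0$. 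Such a $C$ would embed into the critical module $M/K^*$ as a submodule of full GK dimension $d$, and any such submodule of a critical module is itself critical (as one checks directly from Definition \ref{def_cri_mod} using the additivity of $\gk$ on short exact sequences recorded in Proposition \ref{GKdim_prop}). This last upgrade---from the $D\s B$-disjointness of $mD\s B$ to a genuine $D\s A$-cyclic submodule disjoint from $K^*$---is the hardest step and will likely require a delicate use of the crossed product structure, or alternatively a more direct argument which avoids passing through a critical quotient altogether.
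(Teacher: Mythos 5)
Your attempt has a genuine gap, and it sits exactly at the step you yourself flag as unresolved. The reduction to a cyclic (hence Noetherian) module, the base case $\gk=0$ via finite $D$-dimension, and the inductive dispatch of the case where some nonzero submodule has strictly smaller GK dimension are all fine; so is the observation that a maximal element $K^*$ of $\{K\subsetneq M:\gk(M/K)=\gk(M)\}$ yields a \emph{critical quotient} $M/K^*$. But the entire content of the proposition is concentrated in the remaining situation, where every nonzero submodule of $M$ has GK dimension $d=\gk(M)$ and $K^*\neq 0$, and your proposed upgrade cannot work as stated: nothing prevents $K^*$ from being an essential submodule of $M$ (for instance $M$ uniform), in which case \emph{no} nonzero $D\s A$-submodule $C$ satisfies $C\cap K^*=0$, and the plan of embedding a cyclic submodule into the critical quotient collapses. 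The element $m$ you extract from Proposition \ref{GKdim_ch} only gives $mD\s B\cap K^*=0$ for a rank-$d$ subgroup $B$; there is no reason the $D\s A$-module it generates avoids $K^*$, and no choice of $m$ can fix this when $K^*$ is essential. So passing from a critical quotient to a critical submodule is precisely the hard point, and it is not achieved.

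What is actually needed in this case is a finiteness property of the dimension beyond sup-exactness, e.g.\ finite partitivity: no infinite strictly descending chain $M\supsetneq N_1\supsetneq N_2\supsetneq\cdots$ with $\gk(N_i/N_{i+1})=d$ for all $i$. Granting that, the standard argument closes quickly (if no submodule were critical one could iterate ``$N_i$ is not critical, so pick $0\neq N_{i+1}\subsetneq N_i$ with $\gk(N_i/N_{i+1})=d$'' to build such a chain). Establishing this partitivity (or some substitute) for the Brookes--Groves dimension is the substance of Proposition 2.5 of \cite{BG2}; note that the paper you are working from does not prove the statement at all but simply cites that reference, so a self-contained proof would have to supply exactly the ingredient your argument is missing.
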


\begin{proposition}\label{holcri_is_sim}
A critical $D \s A$-module of minimum possible GK dimension is simple
\end{proposition}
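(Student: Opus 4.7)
The plan is to argue by contradiction using the two immediately preceding propositions. Let $M$ be a critical $D \s A$-module whose GK dimension $m := \gk(M)$ is minimal among critical $D \s A$-modules, and suppose for contradiction that $M$ is not simple. Then there exists a proper nonzero submodule $N \subsetneq M$, and in particular $M/N$ is a nonzero $D \s A$-module.

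From Definition \ref{def_cri_mod} applied to the critical module $M$ and the proper nonzero submodule $N$, we obtain the strict inequality $\gk(M/N) < \gk(M) = m$. Now I would invoke Proposition \ref{crit_mod} on the nonzero module $M/N$ to produce a critical submodule $L \le M/N$. By Proposition \ref{GKdim_prop} applied to $L \le M/N$ we get $\gk(L) \le \gk(M/N) < m$. Thus $L$ is a critical $D \s A$-module with GK dimension strictly smaller than $m$, contradicting the minimality of $m$. Hence no such $N$ can exist and $M$ is simple.

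There is no real obstacle here: the argument is a formal consequence of the definition of criticality together with the two facts that every nonzero module contains a critical submodule and that GK dimension does not increase on passing to submodules. The only thing to be mildly careful about is to apply the criticality hypothesis to the quotient $M/N$ (to drop the dimension below $m$) and then apply the existence of critical submodules to that quotient (to reach a critical module violating minimality), rather than trying to work with $N$ itself, where the short exact sequence forces $\gk(N) = m$ and no contradiction is available.
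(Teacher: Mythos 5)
Your argument is correct and is essentially the paper's: criticality applied to the quotient by a nonzero proper submodule forces a strict drop in GK dimension, which contradicts minimality. The only difference is that you read ``minimum possible'' as minimum among critical modules and therefore route through Proposition \ref{crit_mod} and Proposition \ref{GKdim_prop} to extract a critical submodule of $M/N$, whereas the paper reads it as minimum among all nonzero modules, so the nonzero quotient $M/N$ itself already yields the contradiction; the two readings agree, since every nonzero module contains a critical submodule of no larger GK dimension.
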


\begin{proof}
Let $N$ be such a module and let $L$ be a nonzero proper submodule of $N$. Then 
$\gk(N/L) < \gk(N)$. However, this contradicts the definition of a critical module.  
\end{proof}

\begin{lemma}\label{loc_GKdim}
Let $N$ be a simple $F \s A$-module which is not torsion as 
$\mathcal C := F \s \langle x_1, \cdots, x_r \rangle$-module, where $\{ x_1, \cdots, x_n \}$ 
denotes a basis of $A$ and $0 < r < n$. 
Let   
\[  F(x_1, \cdots, x_r)[x_{r +1}, \cdots, x_n] \]
denote the right Ore localization of $F \s A$ at $\mathcal C \setminus \{0\}$ and let $N_r$ stand for the
corresponding localization of $N$.
Then 
\[ \gk(N_r) = \gk(N) - r, \] 
where the GK dimension of $N_r$ is measured relative to the division ring $F(x_1, \cdots, x_r)$.    
\end{lemma}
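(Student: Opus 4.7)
The plan is to use Proposition \ref{GKdim_ch} to characterize $\gk(N)$ and $\gk(N_r)$ as suprema of ranks of subgroups over which the respective module fails to be torsion, and to transfer these torsion conditions across the Ore localization at $\mathcal C \setminus \{0\}$.

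The core technical step is a torsion equivalence: for each $X' \subset \{x_{r+1}, \ldots, x_n\}$,
\[ N_r \text{ is } D \s \langle X'\rangle\text{-torsion} \iff N \text{ is } F \s \langle \{x_1, \ldots, x_r\}\cup X'\rangle\text{-torsion}. \]
The forward direction follows the proof of Lemma \ref{fin_D_dim}: an annihilator $\alpha \in F\s\langle\{x_1, \ldots, x_r\}\cup X'\rangle$ of a generator $n$ gives $s\alpha s^{-1} \in D\s\langle X'\rangle$ annihilating $\phi(n)s^{-1}$ for any $s \in \mathcal C \setminus \{0\}$. Conversely, given $\gamma \in D\s\langle X'\rangle$ annihilating $\phi(n)$, the Ore condition in the domain $F\s\mathcal C$ provides $e \in F\s\mathcal C$ with $\gamma e \in F\s\langle\{x_1, \ldots, x_r\}\cup X'\rangle$ nonzero (as $D\s\langle X'\rangle$ is a domain, being a crossed product of a free abelian group over a division ring). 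The $\mathcal C$-torsion-free hypothesis makes $\phi$ injective, so $n\gamma e = 0$, and simplicity propagates this torsion to all of $N$.

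With the equivalence in hand, the basis-subset form of Proposition \ref{GKdim_ch} applied to $N_r$ yields
\[ \gk(N_r) = \max\bigl\{|X'| : X' \subset \{x_{r+1}, \ldots, x_n\},\ N \text{ is not } F\s\langle\{x_1, \ldots, x_r\}\cup X'\rangle\text{-torsion}\bigr\}, \]
from which $\gk(N_r) \le \gk(N) - r$ is immediate: each admissible $X'$ gives a set $\{x_1, \ldots, x_r\}\cup X'$ of size $r + |X'| \le \gk(N)$.

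The reverse inequality $\gk(N_r) \ge \gk(N) - r$ is the main obstacle, as it requires exhibiting a rank-$\gk(N)$ subgroup $B$ of $A$ with $B \supseteq \mathcal C$ and $N$ not $F\s B$-torsion. Starting from any rank-$\gk(N)$ subgroup $B$ over which $N$ is not torsion (available via the arbitrary-subgroup form of Proposition \ref{GKdim_ch}), I would prove that $N$ is also $F\s(B + \mathcal C)$-torsion-free; the rank bound $\rk(B + \mathcal C) \le \gk(N) = \rk B$ then forces $\mathcal C \subseteq B$. For this claim, I would decompose $B + \mathcal C = \mathcal C \oplus B_\bot$ (with $B_\bot$ a complement of $B \cap \mathcal C$ in $B$, obtained from the freeness of subgroups of $A$) and, reversing the Ore-clearing technique above, show that any annihilator of $n$ in $F\s(B + \mathcal C)$ would, after controlled manipulation, project onto one in $F\s B$ or $F\s\mathcal C$, contradicting one of the two torsion-freeness hypotheses. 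This technical claim, blending simplicity of $N$ with the torsion-freeness on both sides, is the essential ingredient and the hardest part of the proof.
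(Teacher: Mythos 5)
Your first two steps are fine: the torsion-transfer equivalence between $N_r$ over $D \s \langle X' \rangle$ and $N$ over $F \s \langle \{x_1,\dots,x_r\} \cup X' \rangle$ (denominator clearing via the Ore condition in one direction, the argument of Lemma \ref{fin_D_dim} in the other, with simplicity of $N$ giving injectivity of $N \to N_r$ and promoting a single torsion element to全 torsion), combined with Proposition \ref{GKdim_ch}, does yield $\gk(N_r) \le \gk(N) - r$. The gap is in the lower bound, and it is fatal: the claim you build it on --- that a simple module which is torsion-free over $F \s B$ and over $F \s \mathcal{C}$ must be torsion-free over $F \s (B + C)$ --- is false, so no amount of ``controlled manipulation'' of annihilators can prove it. Concretely, let $A = \langle x, y \rangle \cong \mathbb{Z}^2$ and let $F \s A$ be the quantum torus with parameter $q$ not a root of unity. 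Take $N = F[x^{\pm 1}]$ with $x$ acting by multiplication and $y$ acting by the substitution $x \mapsto qx$; since $q$ is not a root of unity the only ideals of $F[x^{\pm 1}]$ stable under this substitution are $0$ and the whole ring, so $N$ is simple, and $\gk(N) = 1$. Put $u = xy$, $v = xy^{-1}$. For every $k \in \mathbb{Z}$ one computes $u^k \cdot 1 = (\text{nonzero scalar})\, x^k$ and likewise for $v$, so no nonzero element of $F \s \langle u \rangle$ or of $F \s \langle v \rangle$ annihilates $1$; by simplicity $N$ is torsion-free over both $F \s \langle u \rangle$ and $F \s \langle v \rangle$, each of rank $1 = \gk(N)$. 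Yet $uv^{-1}$ is a nonzero scalar $\mu$ times $y^2$, so $(uv^{-1} - \mu) \cdot 1 = 0$, and hence $N$ is torsion over $F \s \langle u, v \rangle$ (consistent with $\gk(N) = 1 < 2$ and Proposition \ref{GKdim_ch}). Since $\{v, y\}$ is a basis of $A$, taking $\mathcal{C} = F \s \langle v \rangle$ (a legitimate instance of the lemma with $r = 1$) and $B = \langle u \rangle$ exhibits exactly the configuration your argument assumes, with its conclusion failing. (A smaller quibble: even granting your claim, equality of ranks would only place $C$ in the isolator of $B$, not inside $B$; but that is cosmetic next to the counterexample.)

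What the lower bound really requires is the existence of a rank-$\gk(N)$ subgroup \emph{adapted to} $\mathcal{C}$ --- equivalently, that localization at $\mathcal{C}\setminus\{0\}$ drops the GK dimension by exactly $r$ --- and this does not follow from torsion-freeness over the two pieces separately. This is precisely the content the paper does not reprove: its own argument only observes that simplicity forces $N$ to be $\mathcal{C}$-torsion-free and then cites Lemma 4.5(ii) of \cite{BG3}, whose proof shows that $N_r$ is critical with $\gk(N_r) = \gk(N) - r$ using the Brookes--Groves machinery of critical modules and the geometric invariant, not an elementary annihilator argument. So your upper-bound half stands, but the essential inequality $\gk(N_r) \ge \gk(N) - r$ is not established by the proposal.
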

\begin{proof}
We note that $N$ must be a torsion-free $\mathcal C$-module. Indeed since $N$ is simple by the hypothesis 
the $C$-torsion submodule of $N$ is either $N$ or $0$. But the former possibility is ruled out since $N$ is not a 
torsion $C$-module by the hyoptehsis.      

Now $N$ satisfies the hypothesis of \cite[Lemma 4.5(ii)]{BG3} which asserts that $N_r$ is critical. 
However, in the proof it is actually shown that $N_r$ is critical with $\gk(N_r) = \gk(N) - r$.

\end{proof}

\section{The Brookes--Groves geometric invariant} 
\label{BG_inv}

We now continue our discussion of the module theory of crossed products $D \s A$ by describing the geometric invariant of Brookes and Groves that can be associated 
with each finitely generated $D \s A$-module. This invariant was modelled on the original Bieri--Strebel invariant which was used to  give a geometric criterion for a 
meta-abelian group to be finitely presented.  
Using this invariant we shall establish certain facts concerning $D \s A$-modules which will be used in our investigations that follow.

Let $M$ be a finitely generated $D \s A$-module.
The dual $A^* : = \hm_{\mathbb Z}(A, \mathbb R)$ of $A$ is easily seen to be a real vector space of dimension equal to the rank of $A$.
Hence we may identify $A^*$ with $\mathbb R^n$ where $n = \rk(A)$ and speak of group characters $\phi \in A^*$ as points. 
A point $\phi \in A^*$ determines a submonoid $A(0, \phi)$ and a subsemigroup $A(+, \phi)$ of $A$ as follows:
\begin{align*}\label{def_A0A+}
A(0,\phi)  &:= \{ a \in A \mid \phi(a) \ge 0 \} \\ 
A(+, \phi) &:= \{ a \in A \mid \phi(a) > 0 \}.
\end{align*}
For a subset  $X$ of $A$, we shall denote the subset of $F \s A$ of all elements $\alpha$ with $\supp(\alpha) \in X$ by $F \s X$.    
With each point $\phi \in A^*$ a module for $F \s \ker \phi$ may be associated as follows.
 
\begin{definition}\label{TC_mod}
Let $M$ be a finitely generated $D \s A$-module and let $\mathcal X$ be a (finite) generating set for $M$. 
For a point $\phi \in A^*$, the trailing coefficient module $TC_\phi(M)$ of $M$ at $\phi$ is defined as 
\[ TC_\phi(M) = \mathcal X(D \s A(0, \phi))/\mathcal X(D \s A(+, \phi)). \]      
\end{definition}  

It is easily seen that $TC_\phi(M)$ is a finitely generated $D \s \ker \phi$-module. In general, $TC_\phi(M)$ may depend on the choice of a generating set $\mathcal X$ 
for $M$. However, the next definition turns out to be independent of such a choice.

\begin{definition}\label{def_del_set}
Let $M$ be finitely generated $D \s A$-module. Then $\Delta(M)$ is defined to as the subset of all $\phi \in A^*$ so that 
$TC_\phi(M) = 0$.
\end{definition}

\begin{remark}
Since $A(0,0) = A$ and $A(+, 0)$ is empty, hence $TC_0(M) \ne 0$ if $M$ is nonzero. It follows that $0 \in \Delta(M)$ for nonzero $M$.   
\end{remark}

As already noted, $A^* \cong \mathbb R^n$ and so $\Delta(M)$ can be identified with a subset of $\mathbb R^n$.
A \emph{convex polyhedron} in $\mathbb R^n$ is an intersecion of a finite number of closed linear half spaces in $\mathbb R^n$.
A \emph{polyhedron} is a union of finitely many convex polyhedra. Suppose that a basis is fixed in $A$. A subspace of $A^*$ is \emph{rationally defined} if 
it has a set of generators each of which is a rational linear combination of the elements of the dual basis in $A^*$. 
A convex polyhedron is rational if it can be defined using half spaces with a rational subspace as boundary.
A polyhedron is rational if it is a finite union of rational convex polyhedra.
The dimension of a convex polyhedron is the dimension of the subspace spanned by it.
The dimension of a polyhedron is the maximum of the dimensions of its consitiuent convex polyhedra.

It was shown in \cite{BG1} that for a special class of $D \s A$-modules $\Delta(M)$ is a closed rational polyhedron and a weak 
polyhedrality result was obtained in \cite{BG2} for arbitary finitely generated $D \s A$-modules. Later it was shown in \cite{Wa1} that 
$\Delta(M)$ is a closed rational polyhedral cone for all finitely generated $D \s A$-modules $M$.

Given a subgrop $B \le A$, the inclusion mapping $\iota: B \hookrightarrow A$ induces the restriction mapping 
$\res_B: A^* \rightarrow B^*$. It is easily seen that the 
rational subspaces of $A^*$ are precisely the kernels of the maps $\res_B$ where $B$ ranges over all subgroups of $A$.  

For a subset $S$ of $\mathbb R^n$ and a point $x \in S$, a \emph{neighborhood} of $x$ in $S$ is the intersection with $S$ of a ball in 
$\mathbb R^n$ centered on $x$. The intersection of a ball in $R^n$ with an $m$-dimensional subspace will be called an $m$-ball.
A point of $x$ of $S$ wil be called \emph{regular} if some neighborhood of $x$ in $S$ is an $m$-ball and $S$ has no points with this property for a larger choice of $m$.             
For any subset $S$ of $\mathbb R^n$, the \emph{essential part} of $S^*$ is the (Eulclidean) closure of the set of the regular points of $S$.  
With this notation, Theorem 4.4 of \cite{BG2} asserts that for a finitely generated $D \s A$-module $M$, 
$\Delta^*(M)$ is a closed rational polyhedron of dimension equal to the GK dimension of $M$. The vector spans of 
nieghborhoods of the regular points in $\Delta^*(M)$ are the \emph{carrier spaces} of $\Delta^*(M)$. Note that each carrier space has dimension equal to $m$, where $m = \gk(M)$. 
A carrier space $V$ of $\Delta^*(M)$ is rational (see \cite{Gr2001}) and $V = \ker \res_B$ for some isolated subgroup $B \le A$, that is a subgroup $B \le A$ such that $A/B$ is torsion-free.
We also note that in this case \[ \dim(V) + \rk(B) = \rk(A), \] whence \[ m + \rk(B) = \rk(A).  \]
 
Now we shall employ the geometric invariant to obtain some results concerning $D \s A$-modules that we shall need later.

The following theorem was established in a special case in \cite{GU}. 
The reasoning given here is due to \cite{BG5}.

\begin{theorem}\label{com_mon}
Let $M$ be a finitely generated $F \s A$-module with $\gk(M) = r$, where $0 < r \le \rk(A) - 1$. Let 
$\{ x_1, \cdots, x_n \}$ be a basis of $A$ and suppose that $M$ is not torsion as 
$F \s \langle x_1, \cdots, x_r \rangle$-module. 
Then there exist $l_{r+ 1}, \cdots, l_n \in \langle x_1, \cdots, x_r \rangle$ and nonzero integers $s_{r+1}, \cdots, s_n$ so that 
\[ [\bar {l}_j\bar {x}_j^{s_j} , \bar {l}_k\bar {x}_k^{s_k}] = 1 \ \ \ \ \ \forall j,k \in \{ r+1, \cdots, n \}.  \]      
\end{theorem}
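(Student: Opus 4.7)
The plan is to localize $M$ at $F\s C$, where $C := \langle x_1, \dots, x_r\rangle$, in order to translate the hypothesis into the existence of a finite-dimensional module over a crossed product in which all the relevant commutators lie in the central field $F^*$. Setting $A' := \langle x_{r+1}, \dots, x_n\rangle$, $\mathcal S := F \s C \setminus \{0\}$, and $D := F(x_1, \dots, x_r)$, Lemma \ref{fin_D_dim} supplies that $V := M\mathcal S^{-1}$ is a nonzero right $D\s A'$-module of finite $D$-dimension. Writing $c_{jk} := [\bar x_j, \bar x_k]$, the key structural features of $D\s A'$ are that $c_{jk} \in F^* \subset Z(D)^*$ for all $j, k \in \{r+1, \dots, n\}$, and that the conjugation automorphism $\sigma_j$ on $D$ satisfies $\sigma_j(\bar x_i) = c_{ij}\bar x_i$ with $c_{ij} \in F^*$ for $i \le r < j$.

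I would then pass to a simple $D\s A'$-subquotient $N$ of $V$, still finite-dimensional over $D$. On $N$ each $\bar x_k$ is a $\sigma_k$-semilinear bijection, hence satisfies a minimal annihilator $p_k(T) = T^{s_k} + d_{s_k-1}T^{s_k-1} + \dots + d_0$ in the skew Laurent polynomial ring $D[T, T^{-1}; \sigma_k]$, with $d_0 \ne 0$ because $\bar x_k$ is invertible. The core of the argument is an eigenvalue-shifting manoeuvre: conjugating $p_k(\bar x_k) = 0$ by $\bar x_j$, and using $\bar x_j \bar x_k \bar x_j^{-1} = c_{jk}\bar x_k$ with $c_{jk}$ central, yields $\sum_i \sigma_j(d_i) c_{jk}^i T^i$ as another left annihilator of $\bar x_k$ on $N$. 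By minimality of $p_k$ this must be a left multiple of $p_k$, and matching coefficients forces $\sigma_j(d_i) = c_{jk}^{s_k - i} d_i$ for every nonzero $d_i$. Since $\sigma_j$ scales any monomial $\prod_i \bar x_i^{a_i}$ by $\prod_i c_{ij}^{a_i} \in F^*$, these relations become multiplicative equations in $F^*$ linking the $c_{ij}$'s to the $c_{jk}$'s.

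From these equations the desired $l_j \in C$ and nonzero $s_j \in \mathbb Z$ are to be extracted: expanding $[\bar l_j \bar x_j^{s_j}, \bar l_k \bar x_k^{s_k}] = 1$ in $F^*$ via the bilinearity of the commutator produces exactly a system built from the same $c_{ij}$'s and $c_{jk}$'s, and the constraints from the minimal polynomials supply a simultaneous solution upon setting $l_j := \prod_i x_i^{a_{j,i}}$ with suitable integer exponents. The main obstacle is arranging such a solution simultaneously across all pairs $(j,k)$, since each $p_k$ yields its own constraints and the coefficients $d_i$ need not be monomials in $\bar x_1, \dots, \bar x_r$. One way to surmount this is to invoke the Brookes--Groves carrier-space structure of $\Delta^*(M)$: a carrier space has dimension $r$ and equals $\ker \res_B$ for an isolated rank-$(n-r)$ subgroup $B \le A$, and the non-torsion hypothesis forces $B \cap C = 0$, so $B$ projects to a finite-index subgroup of $A'$; the theorem then reduces to showing that $F\s B$ is commutative up to finite index, after which a basis of $B$ adapted to the projection $A \to A/C$ supplies the required elements $l_j x_j^{s_j}$.
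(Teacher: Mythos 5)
Your opening manoeuvre---localizing at $F \s C$ with $C=\langle x_1,\dots,x_r\rangle$ (Lemma \ref{fin_D_dim}) and then running a minimal-polynomial, eigenvalue-shifting argument in the skew Laurent ring $D[T,T^{-1};\sigma_k]$---is a genuinely different and more elementary idea than the paper's, but by your own admission it does not close: the relations $\sigma_j(d_i)=c_{jk}^{\,s_k-i}d_i$ constrain coefficients $d_i$ that are arbitrary elements of the division ring $F(x_1,\dots,x_r)$ rather than monomials, and you give no mechanism for extracting elements $l_j\in C$ satisfying all the pairwise commutation relations simultaneously. So the proof has to be carried entirely by your fallback paragraph, which is in outline the paper's actual argument (via the Brookes--Groves invariant), except that there the two decisive steps are asserted rather than proved.

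First, it is not true that the non-torsion hypothesis forces $B\cap C=1$ for an arbitrary carrier space $\ker\res_B$ of $\Delta^*(M)$. Since the invariant of a direct sum is the union of the invariants, take (say $n=2$, $r=1$) $M=M_1\oplus M_2$ with $M_1$ torsion-free over $F\s C$ and $M_2$ a finitely generated module of the same GK dimension that is $F\s C$-torsion: $M$ still satisfies the hypothesis, yet by Lemma \ref{css_p} every carrier space coming from $M_2$ has isolated subgroup meeting $C$ nontrivially. The real content is the \emph{selection} of a good carrier space, and this is where the paper works: it replaces $M$ by a critical submodule $N$ of the $F\s C$-torsion-free quotient $M/T_C(M)$ (Proposition \ref{crit_mod}), invokes \cite[Theorem 5.5]{BG2} to get $\res_C(\Delta(N))=\Delta(L)=C^*$ for a torsion-free cyclic critical $F\s C$-submodule $L$, and then uses that the real vector space $C^*$ is not a finite union of proper subspaces to produce one carrier space $V$ with $\res_C(V)=C^*$, whence $V\oplus\ker\res_C=A^*$ and the associated isolated subgroup intersects $C$ trivially. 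Second, the statement you say the theorem ``reduces to''---that the twisted group algebra of the isolated subgroup attached to a carrier space is commutative up to finite index---is precisely the nontrivial input; the paper takes it from the Brookes--Groves analysis \cite{BG5}, and your sketch supplies neither a proof nor a usable citation for it. The final bookkeeping (a finite-index subgroup of $A'$ contains a nonzero power of each $x_j$ modulo $C$, giving the elements $l_jx_j^{s_j}$) is fine, but as it stands the proposal has a genuine gap at both of the points above.
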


\begin{remark}
\label{eq_hyp}
The hypothesis in this theorem is equivalent to the localization 
\[ F(x_1, \cdots, x_r)[x_{r + 1}, \cdots x_n] \] 
having a nonzero finite dimensional module over the division ring $F(x_1, \cdots, x_r)$ (See \cite[Section 2]{Br2}).   
\end{remark}
\begin{proof}
Set $B = \langle x_1, \cdots, x_r \rangle$. It suffices to show that there exists a subgroup $E < A$ such that 
$B \cap E = \langle 1 \rangle$ and $F \s E$ is commutative. 
By the hypothesis the $F \s B$-torsion submodule $T_B(M)$ is a proper submodule of $M$. Moreover $N: = M/T_B(M)$ is 
$F \s B$-torsion-free. Any nonzero submodule of $N$ will also be torsion-free over $F \s B$ and so we may assume noting Proposition \ref{crit_mod} that $N$ is critical.

By \cite[Theorem 5.5]{BG2} and $\pi_B \Delta(N) = \Delta(L)$ for some cyclic critical $F \s B$-submodule $L$ of minimum dimension. Since $L$ is $F \s B$-torsion-free $\pi_B \Delta(N) = B^*$. Thus $\pi_B (\Delta^*(N)) = B^*$. Since the real vector space $B^*$ cannot be a union of a finite many proper subspaces, hence at least one of the $r$-dimensional spaces spanned by the convex polyhedra in $\Delta^*(N)$ is mapped surjectively onto $B^*$. Let $V$ be such a subsepace.
Since $C^*$ is also $r$-dimensional, hence 
\begin{equation}\label{proj}
\ker \pi_B +  V  = \ker \pi_B \oplus V = A^*.
\end{equation}
Since $V$ is a rational subspace, $V = \ker \pi_C$ for some $C \le A$ such that $C$. 
Moreover $C$ has a subgroup $E$ of finite index such that $F \s E$ is commutative. It easily follows from (\ref{proj}) that $B \cap E = \langle 1 \rangle$.
\end{proof}

\begin{lemma}\label{css_p}
Let $M$ be a nonzero finitely generated $D \s A$-module with $\gk(M) \ge 1$. 
Let $C < A$ be an infinite cyclic subgroup of $A$ such that 
$M$ is $D \s C$-torsion. Let $V$ be a carrier space of $\Delta^*(M)$. Then $V^\circ \cap C \ne \langle 1 \rangle$. 
\end{lemma}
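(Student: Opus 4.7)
The plan is to prove the statement by contradiction. Assuming $V^\circ \cap C = \langle 1 \rangle$, I will exhibit a single $\phi \in V$ for which $TC_\phi(M)$ is simultaneously non-zero (as it must be for a point inside a carrier space) and zero (as forced by the $D \s C$-torsion hypothesis).

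First, some preparation. Write $B := V^\circ$, so that $V = \ker \res_B$ and $B$ is an isolated subgroup of $A$ of rank $n - m$, where $m = \gk(M) \ge 1$. Replacing $C$ by its isolator in $A$ if necessary (which is again infinite cyclic, and over which $M$ is still torsion since $D \s C$ sits inside the corresponding crossed product), one may assume $C$ is isolated; then $B \cap C$ is either trivial or all of $C$, so the conclusion $V^\circ \cap C \ne \langle 1 \rangle$ is equivalent to $C \subseteq B$. Suppose for contradiction that $C \not\subseteq B$. A generator $c$ of $C$ then has non-trivial image in the torsion-free group $A/B$, so the linear functional $\phi \mapsto \phi(c)$ on $V \cong (A/B)^*$ is non-zero, making $\{\phi \in V : \phi(c) = 0\}$ a proper subspace of $V$. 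On the other hand, since $V$ is a carrier space, the maximal convex subpolyhedron $P$ of $\Delta^*(M)$ with linear span $V$ is (as a subcone of the cone $\Delta^*(M)$, which passes through the origin) contained in $V$, and its relative interior $U$ is a non-empty open subset of $V$; because $\Delta(M)$ is closed, $U \subseteq \Delta^*(M) \subseteq \Delta(M)$, so $TC_\phi(M) \ne 0$ for all $\phi \in U$. Since the proper subspace $\{\phi(c) = 0\}$ cannot cover the open set $U$, I choose $\phi \in U$ with $\phi(c) \ne 0$, and, after possibly replacing $c$ by $c^{-1}$, assume $\phi(c) > 0$.

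The key step is to show $TC_\phi(M) = 0$ for this $\phi$. Let $\mathcal X = \{x_1, \ldots, x_s\}$ be a generating set of $M$. The $D \s C$-torsion hypothesis gives, for each $i$, a non-zero $\alpha_i = \sum_{k = k_{i, 0}}^{k_{i, 1}} d_{i, k} \bar c^k \in D \s C$ with $x_i \alpha_i = 0$ and $d_{i, k_{i, 0}} \ne 0$. The monomial $d_{i, k_{i, 0}} \bar c^{k_{i, 0}}$ is a unit in the skew Laurent ring $D \s C$, so rearranging $x_i \alpha_i = 0$ and multiplying on the right by its inverse produces
\[
x_i = x_i \beta_i, \qquad \beta_i \in D \s \{c^j : j > 0\} \subseteq D \s A(+, \phi),
\]
the last inclusion being because $\phi(c) > 0$. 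Consequently every element $\sum_i x_i \gamma_i$ of $\mathcal X(D \s A(0, \phi))$ may be rewritten as $\sum_i x_i (\beta_i \gamma_i)$, and each $\beta_i \gamma_i$ lies in $D \s A(+, \phi)$ because $A(+, \phi) \cdot A(0, \phi) \subseteq A(+, \phi)$. Hence $\mathcal X(D \s A(0, \phi)) = \mathcal X(D \s A(+, \phi))$, i.e., $TC_\phi(M) = 0$, contradicting $\phi \in \Delta(M)$ and completing the argument. The main obstacle is exactly this final rewriting: it works because $D \s C$ is a skew Laurent polynomial ring in the single variable $\bar c$, so every non-zero element has an invertible lowest-degree monomial, allowing $x_i \alpha_i = 0$ to be solved for $x_i$ in terms of strictly positive powers of $\bar c$.
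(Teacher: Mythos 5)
Your proof is correct and takes essentially the same route as the paper's: pick a character $\phi$ in the carrier space $V$ that does not vanish on $C$, then use the fact that the lowest $\phi$-order monomial of a $D \s C$-torsion relation is a unit to rewrite the generators inside $\mathcal X(D \s A(+,\phi))$, contradicting $TC_\phi(M) \ne 0$. The only (harmless) difference is how $\phi$ is produced: the paper cites Groves for a point of the carrier cone whose kernel is exactly $V^\circ$, whereas you take a generic point of a relatively open neighborhood of a regular point in $V$, which suffices since all you need is $\phi(c) \ne 0$.
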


\begin{proof}
Set $B_V = V^\circ$. Suppose that $B_V \cap C  = \langle 1 \rangle$. It is known (e.g., \cite[Section 1]{Gr2001}) that the convex polyhedral cone in $\Delta^*(M)$ 
which spans $V$ contains a point  $\psi$ so that 
$\ker \psi = B_V$. Since $\psi$ is a point of $\Delta(N)$, $TC_\psi(N) \ne 0$. Let $\mathcal X$ be a finite generating set for $N$. 
It is easily seen from the definition of $TC_\psi(N)$ that there exists $x \in \mathcal X$ so that 
\begin{equation}\label{TCM_nz}
x \not \in \mathcal X(D \s A(+)).
\end{equation}
By the hypothesis in the lemma $M$ is $D \s C$-torsion and so there exists 
$\gamma \in F \s C \setminus \{0\}$ so that $x\gamma = 0$. Moreover if $c_i, c_j \in \supp(\gamma)$ are distinct elements then 
$\psi(c_i) \ne \psi(c_j)$. For otherwise, \[ c_ic_j^{-1} \in C \cap \ker \psi = C \cap B_V \] 
contrary to our assumption that $B_V \cap C  = \langle 1 \rangle$. Hence multiplying by a suitable unit in $F \s C$ we may assume that $\gamma = 1 - \gamma_\psi$, where 
$\gamma_\psi \in D \s C \cap D \s A(+)$. Tnen we have $x = x\gamma_\psi \in \mathcal X(D \s A(+))$ contray to (\ref{TCM_nz}).     

Hence $B_V \cap C  \ne \langle 1 \rangle$. 
\end{proof}

\section{The dimension of a tensor product}
The tensor product $F \s A_1 \otimes_F F \s A_2$ of twisted group algebras is a twisted group algebra of $A_1 \times A_2$ over $F$.
We shall write $\otimes$ for $\otimes_F$ when there is no danger of confusion. 
We begin with the following characterization of $\dim(F \s A)$ which was conjectured in \cite[Section 3.3]{MP} and was shown in \cite[Theorem A]{Br2}.

\begin{theorem}[C.J.B. Brookes]\label{dim_sup}
The dimension of an algebra $F \s A$ equals the supremum of the ranks of subgroups $B \le A$ so that $F \s B$ is commutative.
\end{theorem}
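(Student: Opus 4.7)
My plan is to prove the two inequalities separately. For the easy direction $\dim(F \s A) \ge \sup\{\rk(B) : F \s B \text{ commutative}\}$, I would argue as follows. Given any $B \le A$ with $F \s B$ commutative and $\rk(B) = k$, I identify $F \s B$ with a Laurent polynomial algebra in $k$ variables over $F$, which has both Krull and global dimension equal to $k$. Using any transversal of $B$ in $A$ as basis, $F \s A$ is a free, hence faithfully flat, right $F \s B$-module, and the standard behaviour of these dimensions under faithfully flat extensions then yields $\dim(F \s A) \ge k$. Taking supremum completes this direction.

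For the harder inequality $\dim(F \s A) \le \sup\{\rk(B) : F \s B \text{ commutative}\}$, my plan is to combine the McConnell--Pettit criterion (\cite[Corollary 3.8]{MP}) with Theorem \ref{com_mon}. Writing $d := \dim(F \s A)$ and treating first the generic case $0 < d < \rk(A)$, the McConnell--Pettit criterion characterises $d$ via partial Ore localizations: after a suitable change of basis $\{x_1, \ldots, x_n\}$ of $A$, there is an $r$ with $r + d = \rk(A)$ for which the partial localization $F(x_1, \ldots, x_r)[x_{r+1}, \ldots, x_n]$ possesses a nonzero module of finite dimension over the division ring $F(x_1, \ldots, x_r)$. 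By Remark \ref{eq_hyp}, this is equivalent to the existence of a finitely generated $F \s A$-module $M$ with $\gk(M) = r$ that is not $F \s \langle x_1, \ldots, x_r \rangle$-torsion---precisely the hypothesis of Theorem \ref{com_mon}. Applying that theorem produces $l_{r+1}, \ldots, l_n \in \langle x_1, \ldots, x_r \rangle$ and nonzero integers $s_{r+1}, \ldots, s_n$ so that the elements $\bar{l}_j \bar{x}_j^{s_j}$ commute pairwise in $F \s A$. Setting $E := \langle l_j x_j^{s_j} : r+1 \le j \le n \rangle$, the nonvanishing of the exponents $s_j$ forces $\rk(E) = n - r = d$, and $F \s E$ is commutative by construction.

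The main obstacle, in my view, lies in the translation between the ring-theoretic McConnell--Pettit criterion and the module-theoretic hypothesis of Theorem \ref{com_mon}; Remark \ref{eq_hyp} is the intended bridge. The substantive geometric content---extracting pairwise-commuting monomials from a suitable module via the polyhedral structure of $\Delta^*$ and the surjectivity of a carrier-space projection onto $\langle x_1, \ldots, x_r \rangle^*$---is absorbed into the proof of Theorem \ref{com_mon} itself. The extreme case $d = \rk(A)$ (which escapes the range $0 < r < \rk(A)$ required by Theorem \ref{com_mon}) must be handled by a separate direct argument: one exploits that all relevant partial localizations of $F \s A$ must then be zero-dimensional over their base division rings, forcing, via Proposition \ref{c_a_fis} and the structure of the centre, a finite-index subgroup $B \le A$ with $F \s B$ commutative and $\rk(B) = \rk(A)$.
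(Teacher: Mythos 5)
The paper never proves this statement: it is quoted as Theorem A of \cite{Br2}, so the only ``paper proof'' is the citation. Measured against that, your plan is essentially a reconstruction of Brookes' published argument, rerouted through this paper's toolkit: the lower bound from freeness of $F \s A$ over a commutative subalgebra $F \s B$, and the upper bound by using the McConnell--Pettit criterion to produce a nonzero module over $F(x_1,\ldots,x_r)[x_{r+1},\ldots,x_n]$ of finite dimension over the division ring with $r+d=\rk(A)$, translating this via Remark \ref{eq_hyp} into the hypothesis of Theorem \ref{com_mon}, and reading off the rank-$d$ subgroup $E=\langle l_jx_j^{s_j}\rangle$ (the $s_j\neq 0$ do force $\rk(E)=d$, and pairwise commutation of the generators gives commutativity of $F\s E$ by the bimultiplicativity identities (\ref{com_id_1})--(\ref{com_id_4})). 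Two structural points in your favour: you correctly avoid Lemma \ref{min_mod} and Corollary \ref{min_dim}, whose proofs in the paper invoke Theorem \ref{dim_sup} and would make the argument circular, and Theorem \ref{com_mon} itself is proved from the Brookes--Groves carrier-space machinery (\cite{BG2}, \cite{BG5}, \cite{Gr2001}) without appeal to Theorem \ref{dim_sup}, so quoting it is legitimate --- though you should be aware that all the genuinely hard content (virtual commutativity of the subgroup orthogonal to a carrier space) is hidden inside it, exactly as in Brookes' own proof.

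There are two soft spots. First, the case $d=\rk(A)$: your sketch (``all relevant partial localizations are zero-dimensional \ldots forcing, via Proposition \ref{c_a_fis} and the structure of the centre \ldots'') is not an argument; Proposition \ref{c_a_fis} only relates triviality of the centre to that of finite-index subalgebras and produces no commutative subgroup. The case is easy, but by a different mechanism: $d=\rk(A)$ means (by the same localization criterion, with nothing localized) that $F\s A$ has a nonzero module $V$ with $m:=\dim_F V<\infty$; taking determinants of the invertible operators in $\bar x_i\bar x_j=\lambda_{ij}\bar x_j\bar x_i$ acting on $V$ gives $\lambda_{ij}^{m}=1$ for all $i,j$, so $B=\langle x_1^{m},\ldots,x_n^{m}\rangle$ has finite index and $F\s B$ is commutative. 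Second, in the lower bound the appeal to ``standard behaviour under faithfully flat extensions'' is fine for Krull dimension (the lattice of right ideals of $F\s B$ embeds strictly into that of $F\s A$), but faithful flatness alone is not the right hypothesis for global dimension; either argue only for Krull dimension and invoke the McConnell--Pettit equality of the two dimensions, or use the crossed-product structure $F\s A=(F\s B)\s(A/B)$, where $\operatorname{pd}_{F\s A}(M\otimes_{F\s B}F\s A)=\operatorname{pd}_{F\s B}(M)$ by the usual induction--restriction argument. Finally, do record the untwisting step: commutativity of $F\s B$ means the restricted cocycle is symmetric, and since $B$ is free abelian such a cocycle is a coboundary, so $F\s B\cong F[B]$, which is what entitles you to the value $\rk(B)$ for its dimensions.
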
 

We note a few consequences of the above theorem.

\begin{corollary}\label{dim_fi_sg}
Given an algebra $F \s A$,
let $A' < A$ be a subgroup of finite index in $A$.
Then \[ \dim(F \s A') = \dim(F \s A). \]  
\end{corollary}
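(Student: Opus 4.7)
The plan is to combine Brookes' theorem (Theorem \ref{dim_sup}) with the elementary observation that intersecting a subgroup of $A$ with a finite-index subgroup preserves its rank.

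First I would handle the easy inequality $\dim(F \s A') \le \dim(F \s A)$. Given any subgroup $B' \le A'$ with $F \s B'$ commutative, $B'$ is also a subgroup of $A$ and the subalgebra $F \s B' \subseteq F \s A$ is the same commutative algebra. Taking the supremum of $\rk(B')$ over such $B'$ and invoking Theorem \ref{dim_sup} on both sides gives the inequality.

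For the reverse inequality $\dim(F \s A) \le \dim(F \s A')$, I would pick a subgroup $B \le A$ with $F \s B$ commutative realizing $\rk(B) = \dim(F \s A)$ (using Theorem \ref{dim_sup}; if no maximum is attained one can either argue that ranks are integers bounded by $\rk(A)$ so a maximum exists, or work with a sequence). Set $B_0 := B \cap A'$. Since $[A : A'] < \infty$, we have $[B : B_0] \le [A : A'] < \infty$, so $B_0$ is a finite-index subgroup of the free abelian group $B$ and hence $\rk(B_0) = \rk(B)$. Now $F \s B_0$ is a subalgebra of the commutative algebra $F \s B$, hence is itself commutative; and $B_0 \le A'$. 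Applying Theorem \ref{dim_sup} to $F \s A'$ yields $\dim(F \s A') \ge \rk(B_0) = \rk(B) = \dim(F \s A)$.

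There is essentially no obstacle: the only small point worth being careful about is that the supremum in Theorem \ref{dim_sup} is in fact a maximum (which is automatic since the ranks involved are nonnegative integers bounded above by $\rk(A)$). Once that is noted, the argument is just two one-line inequalities, one in each direction.
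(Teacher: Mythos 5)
Your argument is correct, and it follows the route the paper intends: the corollary is stated there without proof as an immediate consequence of Theorem \ref{dim_sup}, and your two inequalities (commutative $F \s B'$ for $B' \le A'$ also sits inside $F \s A$; and $B \cap A'$ has finite index in $B$, hence full rank, with $F \s (B \cap A')$ commutative) are exactly the standard way to make that consequence explicit. The remark that the supremum is attained because ranks are integers bounded by $\rk(A)$ is a sensible precaution, though even without it one could run the intersection argument for each $B$ approaching the supremum.
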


\begin{corollary}\label{ten_dim}
Let $F \s A_1$ and $F \s A_2$ be arbitrary twisted group algebras. 
Then 
\begin{equation}\label{ten_ineq}
 \dim(F \s A_1 \otimes_F F \s A_2 ) \ge \dim(F \s A_1) + \dim(F \s A_2).  
\end{equation}
\end{corollary}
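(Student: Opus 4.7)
The plan is to deduce this directly from Theorem \ref{dim_sup} by exhibiting a commutative subalgebra of $F \s (A_1 \times A_2)$ whose underlying group has rank equal to $\dim(F \s A_1) + \dim(F \s A_2)$.

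First I would invoke Theorem \ref{dim_sup} on each factor. Since $A_1$ and $A_2$ are finitely generated free abelian groups, the supremum in Brookes' theorem is attained: there exist subgroups $B_i \le A_i$ with $\rk(B_i) = \dim(F \s A_i)$ such that each subalgebra $F \s B_i$ is commutative.

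Next I would observe that under the identification $F \s A_1 \otimes_F F \s A_2 \cong F \s (A_1 \times A_2)$, the subalgebra $F \s B_1 \otimes_F F \s B_2$ corresponds to $F \s (B_1 \times B_2)$. This tensor product of two commutative $F$-algebras is again commutative: elements of $\overline{B_1} \otimes 1$ commute among themselves by commutativity of $F \s B_1$, similarly for $1 \otimes \overline{B_2}$, and elements of the two tensor factors commute with each other by the defining property of the tensor product of $F$-algebras. Hence $F \s (B_1 \times B_2)$ is a commutative subalgebra of $F \s (A_1 \times A_2)$ with $\rk(B_1 \times B_2) = \rk(B_1) + \rk(B_2) = \dim(F \s A_1) + \dim(F \s A_2)$.

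Applying Theorem \ref{dim_sup} in the reverse direction to $F \s (A_1 \times A_2)$, the dimension is at least the rank of any commutative subgroup-subalgebra, which gives
\[ \dim(F \s A_1 \otimes_F F \s A_2) \ge \rk(B_1 \times B_2) = \dim(F \s A_1) + \dim(F \s A_2), \]
as desired. There is no genuine obstacle here; the only point requiring care is checking that the tensor product of commutative twisted group algebras remains commutative when realized as a subalgebra of $F \s (A_1 \times A_2)$, which is immediate from how the twisted multiplication on $A_1 \times A_2$ restricts to $B_1 \times B_2$.
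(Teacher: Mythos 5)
Your argument is correct and is exactly the one the paper intends: the corollary is stated without proof as an immediate consequence of Theorem \ref{dim_sup}, with the introduction indicating precisely this route (commutative subalgebras $F \s B_i$ of maximal rank in each factor, whose tensor product $F \s (B_1 \times B_2)$ is a commutative subalgebra of $F \s (A_1 \times A_2)$ of rank $\dim(F \s A_1) + \dim(F \s A_2)$). Your added remark that the supremum in Brookes' theorem is attained, since subgroup ranks are integers bounded by $\rk(A_i)$, is the only detail the paper leaves implicit, and it is handled correctly.
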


In Theorem \ref{ub_gen} we shall show an upper bound for $\dim(F \s A_1 \otimes_F F \s A_2 )$.  
We shall need the next few facts in the proof of this theorem.
The following fundamental result was shown in \cite[Theorem 3]{Br2}.

\begin{theorem}[Brookes]\label{F-ab_sgrp}  
If an algebra $F \s A$ has a finitely generated nonzero module $N$ with $\gk(N)= m$, then $A$ contains a subgroup $B$ with rank $\rk(A) - m$ such that 
$F \s B$ is commutative.    
\end{theorem}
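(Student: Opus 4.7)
The plan is to reduce the statement to the commuting-monomials result Theorem~\ref{com_mon} via an appropriate choice of basis. Set $n=\rk(A)$ and fix a basis $\{x_1,\dots,x_n\}$ of $A$. By Proposition~\ref{GKdim_ch} the value $\gk(N)=m$ is realised on a size-$m$ subset of this basis, and after relabelling we may assume $N$ is not torsion as an $F\s\langle x_1,\dots,x_m\rangle$-module.

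In the principal range $0<m<n$ I would apply Theorem~\ref{com_mon} with $r=m$ to obtain elements $l_{m+1},\dots,l_n\in\langle x_1,\dots,x_m\rangle$ and nonzero integers $s_{m+1},\dots,s_n$ such that the monomials $\bar l_j\bar x_j^{s_j}$ pairwise commute in $F\s A$. Define $B:=\langle l_jx_j^{s_j}:j=m+1,\dots,n\rangle\le A$. Under the projection $A\to A/\langle x_1,\dots,x_m\rangle$ the generator $l_jx_j^{s_j}$ maps to $x_j^{s_j}$, and since the $x_j$ are independent and each $s_j\ne 0$, these images are $\mathbb Z$-linearly independent. Therefore $B$ is free abelian of rank $n-m=\rk(A)-m$, and $F\s B$ is commutative by construction.

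Two edge cases remain. When $m=n$ the trivial subgroup has the required rank zero. The real work lies in the case $m=0$: here $N$ is finite-dimensional over $F$ and Theorem~\ref{com_mon} is unavailable because $\Delta^*(N)$ collapses to a point, so its carrier-space argument offers nothing. I would handle this case by a determinant argument. Letting $d:=\dim_F N$, each $\bar x_i$ is a unit of $F\s A$ and thus acts on $N$ as an invertible $F$-linear map with nonzero determinant. Taking $\det_N$ on both sides of $\bar x_i\bar x_j=[\bar x_i,\bar x_j]\bar x_j\bar x_i$ and cancelling the scalars yields $[\bar x_i,\bar x_j]^d=1$ for all $i,j$. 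Iterating (\ref{com_id_1}) then gives $[\bar x_i^d,\bar x_j]=[\bar x_i,\bar x_j]^d=1$, so the $\bar x_i^d$ commute pairwise with every $\bar x_j$, and $B:=\langle x_1^d,\dots,x_n^d\rangle$ is of rank $n=\rk(A)$ with $F\s B$ commutative. The principal obstacle is this $m=0$ step; it is short but genuinely needs the finite-dimensionality of $N$ through the determinant trick, since the polyhedral machinery that powers Theorem~\ref{com_mon} no longer supplies any directions in $A^*$ once $\Delta^*(N)$ is a point.
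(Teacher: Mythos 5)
Your argument is correct, but it is worth being clear that the paper itself offers no proof of this statement: it is quoted verbatim from Brookes \cite[Theorem 3]{Br2}, where it is established via the Brookes--Groves invariant. Your route instead derives it inside the paper's framework: for $0<m<\rk(A)$ you use Proposition \ref{GKdim_ch} to find, after relabelling, that $N$ is not $F\s\langle x_1,\dots,x_m\rangle$-torsion, feed this into Theorem \ref{com_mon} with $r=m$, and take $B=\langle l_jx_j^{s_j}: j>m\rangle$; the projection argument for $\rk(B)=\rk(A)-m$ is fine, and commutativity of $F\s B$ follows from the pairwise commutation of the monomials together with the identities (\ref{com_id_1})--(\ref{com_id_4}), since $\bar l_j\bar x_j^{s_j}$ is a scalar multiple of $\overline{l_jx_j^{s_j}}$ (the paper uses exactly this passage from commuting monomials to a commutative subalgebra in the proof of Theorem \ref{ub_gen}). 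The edge cases are also handled correctly: $m=\rk(A)$ is trivial, and for $m=0$ the determinant trick is sound (finite $F$-dimension of $N$ being \cite[Lemma 2.2(3)]{BG2}, as invoked in Lemma \ref{fin_D_dim}) and genuinely needed, since Theorem \ref{com_mon} excludes $r=0$. The one caveat is logical economy rather than correctness: the paper's proof of Theorem \ref{com_mon} already contains, as an unproved imported step, the assertion that the subgroup $C$ with $\ker\res_C$ a carrier space of $\Delta^*(N)$ (so $\rk(C)=\rk(A)-\gk(N)$) has a finite-index subgroup $E$ with $F\s E$ commutative -- which is precisely the statement you are proving. So your reduction is formally valid and makes the dependence explicit, but it is not an independent proof of Brookes' theorem; it re-extracts it from machinery of the same strength. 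What it buys is a tidy self-contained deduction within the paper plus an elementary treatment of the $\gk=0$ case, which Brookes' polyhedral argument handles only implicitly.
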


\begin{corollary}\label{min_dim}
Let $M$ be a nonzero finitely generated module over an algebra $F \s A$. Then 
$\gk(M) \ge \rk(A) - \dim(F \s A)$. 
\end{corollary}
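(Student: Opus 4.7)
The plan is to deduce this immediately by combining the two results quoted just above, namely Theorem \ref{F-ab_sgrp} and Theorem \ref{dim_sup}. Set $m := \gk(M)$. Since $M$ is nonzero and finitely generated over $F \s A$, Theorem \ref{F-ab_sgrp} (Brookes) applies and produces a subgroup $B \le A$ with
\[ \rk(B) = \rk(A) - m \]
such that the twisted group subalgebra $F \s B$ is commutative.

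Next, I would invoke Theorem \ref{dim_sup}, which characterizes $\dim(F \s A)$ as the supremum of $\rk(B')$ over all subgroups $B' \le A$ for which $F \s B'$ is commutative. The subgroup $B$ produced above is a particular such subgroup, so
\[ \dim(F \s A) \ge \rk(B) = \rk(A) - \gk(M). \]
Rearranging this inequality yields $\gk(M) \ge \rk(A) - \dim(F \s A)$, which is exactly the desired conclusion.

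There is no real obstacle here: the entire content of the corollary is packaged into the two theorems being cited, and all that remains is to line them up. The only thing worth flagging explicitly is that Theorem \ref{F-ab_sgrp} requires the hypothesis that $M$ be nonzero and finitely generated (so that $\gk(M)$ is a well-defined nonnegative integer and the theorem is applicable), which is part of our hypothesis.
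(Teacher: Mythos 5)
Your proof is correct and follows essentially the same route as the paper: both arguments combine Theorem \ref{F-ab_sgrp} with Theorem \ref{dim_sup}, the only difference being that the paper phrases it as a proof by contradiction while you argue directly, which is if anything slightly cleaner.
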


\begin{proof}
 Indeed if this were not true then by the last theorem, 
$A$ would contain a subgroup $B$ with $\rk(B) > m$ so that $F \s B$ is commutative. But this contradicts Theorem \ref{dim_sup}.  
\end{proof}

We shall use the last result in the following form. 

\begin{lemma}\label{min_mod}
If an algebra $F \s A$ has dimension $m$ then it has a  
simple module $N$ with $\gk(N) = \rk(A) - m$.  
\end{lemma}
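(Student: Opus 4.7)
The plan is to first exhibit a cyclic $F \s A$-module $M$ with $\gk(M) = \rk(A) - m$, and then extract a simple submodule by invoking the criticality machinery of Section \ref{b_str_mod}.

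By Theorem \ref{dim_sup}, pick a subgroup $B \le A$ with $\rk(B) = m$ such that $F \s B$ is commutative. Since $F \s B$ is then a finitely generated commutative $F$-algebra (indeed a Laurent polynomial ring in $m$ variables), it has a maximal ideal $\mathfrak m$, and Zariski's lemma makes $F' := F \s B / \mathfrak m$ a finite extension of $F$. I would form the cyclic right $F \s A$-module
\[ M := F' \otimes_{F \s B} F \s A, \]
generated by $1 \otimes 1$. Because $F \s A$ is free as a left $F \s B$-module on any set of coset representatives for $B$ in $A$, the module $M$ is nonzero.

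The crux is to show $\gk(M) \le \rk(A) - m$. Let $C \le A$ be any subgroup with $\rk(C) > \rk(A) - m$. Standard rank arithmetic in the free abelian group $A$ gives $\rk(C \cap B) \ge \rk(C) + \rk(B) - \rk(A) > 0$, so $C \cap B$ contains an element $b$ of infinite order. The image of $\bar b$ in $F'$ is algebraic over $F$, so $p(\bar b) \in \mathfrak m$ for some nonzero $p(X) \in F[X]$; thus $p(\bar b)$ is a nonzero element of $F \s \langle b \rangle \subseteq F \s C$ annihilating $1 \otimes 1$ in $M$. Because $\mathcal S_C$ is an Ore set in $F \s A$, the torsion submodule $T_C(M)$ is an $F \s A$-submodule, so the fact that it contains the generator forces $M = T_C(M)$. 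Proposition \ref{GKdim_ch} then yields $\gk(M) \le \rk(A) - m$, and Corollary \ref{min_dim} upgrades this to equality.

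To finish, I would apply Proposition \ref{crit_mod} to obtain a critical submodule $M' \le M$. Proposition \ref{GKdim_prop} gives $\gk(M') \le \gk(M) = \rk(A) - m$, while Corollary \ref{min_dim} prevents strict inequality, so $\gk(M')$ attains the minimum possible GK dimension among nonzero finitely generated $F \s A$-modules. Proposition \ref{holcri_is_sim} then declares $M'$ simple, and we take $N := M'$. The only real subtlety is ensuring that the annihilating relation for $1 \otimes 1$ can be realized inside $F \s \langle b \rangle$ itself rather than merely inside some extension ring; Zariski's lemma supplies exactly this, making the argument go through uniformly in $F$ without any algebraic closure assumption.
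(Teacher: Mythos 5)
Your argument is correct, but it reaches the conclusion by a different route than the paper. The paper simply cites Brookes (\cite[Section 2]{Br2}) for the existence of a nonzero finitely generated $F\s A$-module $M$ with $\gk(M)=\rk(A)-m$, and then obtains a simple submodule by noting that $M$ has finite length (via \cite[Lemma 5.6]{MP} together with Corollary \ref{min_dim}), the GK dimension of that simple submodule being forced to equal $\rk(A)-m$ by Corollary \ref{min_dim} and Proposition \ref{GKdim_prop}. You instead reconstruct the cited input: starting from a maximal-rank commutative subalgebra $F\s B$ given by Theorem \ref{dim_sup}, you induce the cyclic module $F'\otimes_{F\s B}F\s A$ from a maximal ideal of the commutative affine algebra $F\s B$ (Zariski's lemma guaranteeing $F'$ is finite over $F$, so the generator is killed by a nonzero element of $F\s\langle b\rangle$ for every infinite-order $b\in C\cap B$), and the rank count $\rk(C\cap B)\ge \rk(C)+m-\rk(A)$ plus the Ore property of $\mathcal S_C$ shows $M$ is $F\s C$-torsion for every $C$ of rank exceeding $\rk(A)-m$, so Proposition \ref{GKdim_ch} and Corollary \ref{min_dim} pin down $\gk(M)=\rk(A)-m$; you then pass to a critical submodule and invoke Propositions \ref{crit_mod}, \ref{GKdim_prop} and \ref{holcri_is_sim} instead of the finite-length argument. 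Both endgames are legitimate (your criticality step tacitly takes the critical submodule finitely generated, which matches how the paper uses this machinery elsewhere, while the paper's finite-length route avoids that point entirely); what your version buys is self-containedness, since it makes explicit the induced-module construction that underlies the Brookes reference, at the cost of a longer proof, whereas the paper's proof is shorter by leaning on the literature. One cosmetic remark: the parenthetical claim that $F\s B$ is a Laurent polynomial ring needs the observation that a commutative twisted group algebra of a free abelian group is untwisted; it is true but unnecessary, since finite generation and commutativity already suffice for Zariski's lemma.
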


\begin{proof}
It was shown in \cite[Section 2]{Br2} that there exists a nonzero finitely generated $F \s A$-module $M$ with $\gk(M) = \rk(A) - m$.
Here we merely argue that such a module $M$ has a simple submodule whose GK dimension must be $\rk(A) - m$ in view of Corollary \ref{min_dim}.
But in view of \cite[Lemma 5.6]{MP} and Corollary \ref{min_dim}, $M$ has finite length.
\end{proof}

We now prove our first main result.

\begin{theorem}\label{ub_gen}
Given algebras $F \s A_1$ and $F \s A_2$ suppose that $\dim(F \s A_i) < \rk(A_i)$ for $i \in \{1,2\}$. Let 
$d := \dim(F \s A_1 \otimes_F F \s A_2)$
\begin{equation}
\label{dim_bd}
 d \le \min\{ \dim(F \s A_1) + \rk(A_2), \dim(F \s A_2) + \rk(A_1)   \} - 1. 
\end{equation}  
\end{theorem}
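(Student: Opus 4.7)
The plan is to prove $d \le d_1 + n_2 - 1$, writing $d_i := \dim(F \s A_i)$ and $n_i := \rk(A_i)$; the symmetric bound $d \le d_2 + n_1 - 1$ will follow by interchanging the roles of the two factors. Throughout I use the identification $F \s A_1 \otimes_F F \s A_2 = F \s (A_1 \times A_2)$ together with the bimultiplicative commutator identity
\[ [\bar b, \bar{b'}]_{F \s (A_1 \times A_2)} = [\bar{\pi_1(b)}, \bar{\pi_1(b')}]_{F \s A_1} \cdot [\bar{\pi_2(b)}, \bar{\pi_2(b')}]_{F \s A_2}, \]
where $\pi_i$ denotes the projection onto $A_i$.

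By Theorem \ref{dim_sup} I pick $B \le A_1 \times A_2$ with $F \s B$ commutative and $\rk B = d$. Set $B_1 := B \cap A_1$ and $B_2 := \pi_2(B) \le A_2$. The short exact sequence $0 \to B_1 \to B \to B_2 \to 0$ gives $\rk B = \rk B_1 + \rk B_2$; since $F \s B_1 \subseteq F \s B$ is commutative we have $\rk B_1 \le d_1$ by Theorem \ref{dim_sup}, and $\rk B_2 \le n_2$ trivially. This already yields the non-strict bound $d \le d_1 + n_2$. I now assume for contradiction that $d = d_1 + n_2$, which forces $\rk B_1 = d_1$ and $\rk B_2 = n_2$, and aim to produce a commutative subgroup of $A_2$ of rank $n_2$, contradicting the hypothesis $d_2 < n_2$.

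The main construction uses the centralizer $B_1^\perp := \{a \in A_1 : [\bar a, \bar b_0] = 1 \text{ for all } b_0 \in B_1\}$. The crucial subclaim is that $B_1^\perp / B_1$ is finite: clearly $B_1 \subseteq B_1^\perp$, and if some $c \in B_1^\perp$ lay outside the isolated hull of $B_1$ in $A_1$, then $\langle B_1, c \rangle$ would be a commutative subgroup (its generators commute pairwise) of rank $d_1 + 1$, violating Theorem \ref{dim_sup}; hence $B_1^\perp$ is contained in the rank-$d_1$ isolated hull of $B_1$. Applying the bimultiplicative identity to $b \in B$ and $b_0 \in B_1$ (where $\pi_2(b_0) = 0$) gives $[\bar{\pi_1(b)}, \bar b_0]_{F \s A_1} = 1$, so $\pi_1(B) \subseteq B_1^\perp$. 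I then define
\[ \alpha : B_2 \longrightarrow B_1^\perp / B_1, \qquad \alpha(\beta) := \pi_1(b) + B_1 \]
for any lift $b \in B$ with $\pi_2(b) = \beta$; this is well-defined (two lifts differ by an element of $B_1$) and is a group homomorphism into a finite group, so $K := \ker \alpha$ has finite index in $B_2$ and thus $\rk K = n_2$.

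To conclude, for $\beta, \beta' \in K$ I choose lifts $b, b' \in B$ with $\pi_1(b), \pi_1(b') \in B_1$, and expand
\[ 1 = [\bar b, \bar{b'}] = [\bar{\pi_1(b)}, \bar{\pi_1(b')}]_{F \s A_1} \cdot [\bar\beta, \bar{\beta'}]_{F \s A_2}. \]
The first factor is $1$ because both entries lie in $B_1$ and $F \s B_1$ is commutative, so $[\bar\beta, \bar{\beta'}]_{F \s A_2} = 1$. Hence $K \le A_2$ has rank $n_2$ with $F \s K$ commutative, and Theorem \ref{dim_sup} forces $d_2 \ge n_2$, contradicting $d_2 < n_2$. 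The principal obstacle is the rank bound $\rk B_1^\perp = d_1$; once this is in hand the rest of the argument is careful commutator bookkeeping using the tensor decomposition.
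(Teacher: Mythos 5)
Your proof is correct, but it takes a genuinely different route from the paper. The paper argues module-theoretically: it produces a simple module over $F \s A_1 \otimes F \s A_2$ of minimal GK dimension (Lemma \ref{min_mod}), bounds its GK dimension from below by restricting to $F \s A_1$ (Corollary \ref{min_dim}), localizes, and then invokes Theorem \ref{com_mon} (which rests on the Brookes--Groves invariant) to manufacture commuting monomials, from which a commutator computation extracts a finite-index commutative subgroup of $A_2$. You bypass all of that: you start from a commutative subgroup $B \le A_1 \times A_2$ of rank $d$ supplied directly by Theorem \ref{dim_sup}, split its rank via $B_1 = B \cap A_1$ and $B_2 = \pi_2(B)$, and in the boundary case $d = d_1 + n_2$ use the centralizer $B_1^{\perp}$ --- whose containment in the isolated hull of $B_1$ follows from Theorem \ref{dim_sup} applied to $F \s A_1$, since adjoining any $c \in B_1^{\perp}$ outside that hull would give a commutative subalgebra of rank $d_1 + 1$ --- to get a finite-index kernel $K \le B_2$ on which the bimultiplicativity of commutators in the tensor product forces $F \s K$ to be commutative, contradicting $d_2 < n_2$. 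All the ingredients check out: the commutator identity $[\bar b, \bar b'] = [\overline{\pi_1(b)}, \overline{\pi_1(b')}]\,[\overline{\pi_2(b)}, \overline{\pi_2(b')}]$ is exactly how the paper itself exploits the tensor decomposition, and the rank bookkeeping ($\rk B = \rk B_1 + \rk B_2$, finiteness of $B_1^{\perp}/B_1$, $\rk K = n_2$) is sound. What each approach buys: yours is elementary and self-contained modulo Brookes' theorem, needing no GK dimension, localization, or the geometric invariant, and it isolates cleanly that the hypothesis $\dim(F \s A_2) < \rk(A_2)$ is used only to exclude the boundary case (consistent with Remark \ref{codim0}); the paper's heavier machinery is, however, what carries over to the strict inequality of Theorem \ref{dim_<_ub}, where a purely group-theoretic argument of your kind would not suffice.
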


\begin{proof}

Without loss of generality, we may assume that 
\[ \dim(F \s A_1) + \rk(A_2)  \le \dim(F \s A_2) + \rk(A_1) . \] 
We will show that
\[ \dim(F \s A_1 \otimes_F F \s A_2) \le \dim(F \s A_1) + \rk(A_2) - 1. \]   
Set $r_i  = \rk(A_i)$,  $d_i = \dim(F \s A_i)$ for $i \in \{1,2\}$, $d  = \dim(F \s A_1 \otimes_F F \s A_2)$ and $\mathcal A = F \s A_1 \otimes_F F \s A_2$.  

If $d > d_1 + r_2$, then by Lemma \ref{min_mod}, $\mathcal A$ has a nonzero finitely generated module $M$ with
$\gk(M) < r_1 - d_1$. Let $M_1$ be a nonzero $F \s A_1$-submodule of $M$. 
Then $\gk(M_1) \le \gk(M)$ by Proposition \ref{GKdim_prop}. But this is impossible in view of Corollary \ref{min_dim}.

We now suppose that $d = d_1 + r_2$. In this case, by Lemma \ref{min_mod}, $\mathcal A$ has a simple module $M'$ with $\gk(M') = r_1 - d_1$. Let $M_1'$ be a finitely generated $F \s A_1$-submodule of $M$. 
In view of  Corollary \ref{min_dim} and Proposition \ref{GKdim_prop} we have
\[ r_1 - d_1 \le \gk(M_1') \le \gk(M') = r_1 - d_1, \]
whence $\gk(M_1') = r_1 - d_1$.  
Let $\{ x_1, \cdots, x_{r_1} \}$ be a basis in $A_1$. 
By Proposition \ref{GKdim_ch}, there must be a subset 
$\{j_1, \cdots, j_{r_1 - d_1}\} \subset \{1, \cdots, r_1\}$ 
so that $M_1'$ and hence $M'$ is not $F \s \langle x_{j_1}, \cdots, x_{j_{r_1 - d_1}} \rangle$-torsion.
There is no harm in  assuming that $(j_1, \cdots, j_{r_1 - d_1}) = (1, \cdots, r_1 - d_1)$.
By Lemma \ref{fin_D_dim}, the crossed product  
\[ F(x_1, \cdots, x_{r_1 - d_1})[x_{r_1 - d_1 + 1}, \cdots, x_{r_1}, y_1, \cdots, y_{r_2}] \] 
has a nonzero module finite dimensional as $F(x_1, \cdots, x_{r_1 - d_1})$-space. 
Moreover by Theorem \ref{com_mon}, 
there exist 
\[ l_{i}, l'_{j} \in \langle x_1, \cdots, x_{r_1 - d_1} \rangle  \ \mbox{and} \ s_i, s'_j \in \mathbb Z - \{0\}  \   \  \  \  \ \forall i \in \{r_1 - d_1 +1, \cdots, r_1 \}, j \in \{1, \cdots, r_2 \} \] 
such that 
\begin{align}\label{com_1}
[ \bar {l}_i\bar {x}_i^{s_i}, \bar {l}_k\bar {x}_k^{s_k} ] &= 1  \  \  \  \  \ \forall i,k  \in \{r_1 - d_1 +1, \cdots, r_1 \},
\\ \label{com_2}
[ \bar {l}_i\bar {x}_i^{s_i}, \bar {l}_j'\bar {y}_j^{s'_j} ] &= 1  \  \  \  \  \ \forall i  \in \{r_1 - d_1 +1, \cdots, r_1 \}, j \in \{1, \cdots, r_2 \}, \ \ \mbox{and}
\\ \label{com_3}
[ \bar {l}_j'\bar {y}_j^{s'_j}, \bar {l}_p'\bar {y}_p^{s'_p} ] &= 1  \  \  \  \  \ \forall j,p \in \{1, \cdots, r_2 \}. 
\end{align}  
 
We note that (\ref{com_1}) means that for 
\[ C_1 : = \langle  {l}_{r_1 - d_1 + 1} {x}_{r_1 - d_1 + 1}^{s_{r_1 - d_1 + 1}}, \cdots,  {l}_{r_1} {x}_{r_1}^{s_{r_1}} \rangle \] 
the subalgebra $F \s C_1$ is commutative. 
Since $\mathcal A$ is a tensor product of $F \s A_1$ and $F \s A_2$, we have 
\begin{equation}\label{com_4} 
[ \bar {l}_i\bar {x}_i^{s_i}, \bar {y}_j^{s'_j} ] = 1  \  \  \  \  \ \forall i  \in \{r_1 - d_1 +1, \cdots, r_1 \}, j \in \{1, \cdots, r_2 \}. 
\end{equation}   
Applying (\ref{com_4}) and (\ref{com_id_2}) to (\ref{com_2}) we get 
\begin{align}\label{com_5}
1 &= [ \bar {l}_i\bar {x}_i^{s_i}, \bar {l}_j'\bar {y}_j^{s'_j} ]  \\ \label{com_6}  
  &= [\bar {l}_i\bar {x}_i^{s_i}, \bar {l}_j'][ \bar {l}_i\bar {x}_i^{s_i}, \bar {y}_j^{s'_j} ] \\ \label{com_7}
  &= [\bar {l}_i\bar {x}_i^{s_i}, \bar {l}_j']  \ \ \ \ \  \forall i  \in \{r_1 - d_1 +1, \cdots, r_1 \}, j \in \{1, \cdots, r_2 \}. 
\end{align}
By Theorem \ref{dim_sup}, the subgroup $C_1$ of $A_1$ 
has the maximal possible rank with respect to $F \s C_1$ being commutative.
But then (\ref{com_5}) -- (\ref{com_7}) imply that 
\[ {l}_j'^{t_j} \in C_1   \ \ \ \ \  \forall j \in \{1, \cdots, r_2 \}, \]  
where $t_j$ is a positive integer for each $j$.  
We now use the fact that $\mathcal A$ is a tensor product and apply (\ref{com_id_1})--(\ref{com_id_2}) to (\ref{com_3}). We get
\begin{align*}
1 &=  [ \bar {l}_j'\bar {y}_j^{s'_j}, \bar {l}_p'\bar {y}_p^{s'_p} ] \\
  &=  [ \bar {l}_j', \bar {l}_p'][ \bar {y}_j^{s'_j}, \bar {y}_p^{s'_p} ] \\
  &=  \bigl ( [ \bar {l}_j', \bar {l}_p'][ \bar {y}_j^{s'_j}, \bar {y}_p^{s'_p} ] \bigr )^{t_jt_p}  \\ 
  &=  [ \bar {l}_j', \bar {l}_p']^{t_jt_p}[ \bar {y}_j^{s'_j}, \bar {y}_p^{s'_p} ]^{t_jt_p} \\
  &=  [\bar {l}_j'^{t_j}, \bar {l}_p'^{t_p}][ \bar {y}_j^{s'_jt_j}, \bar {y}_p^{s'_pt_p} ] \\
  &=  [ \bar {y}_j^{s'_jt_j}, \bar {y}_p^{s'_pt_p} ]  \ \ \ \ \   \forall j,p \in \{1, \cdots, r_2 \}, 
\end{align*} 
where in the last step we have made use of the commutativity of $F \s C$. 
We have just shown that 
\[ [ \bar {y}_j^{s'_jt_j}, \bar {y}_p^{s'_pt_p} ] = 1  \ \ \ \ \   \forall j,p \in \{1, \cdots, r_2 \},  \]
where $s'_j$ and $t_j$ are nonzero integers for all $j$.
It follows that $A_2$ has a subgroup $A_2'$ of finite index so that $F \s A_2'$ is commutative.
By Theorem \ref{dim_sup}, $d_2 = r_2$ contrary to the hypothesis.
\end{proof}

\begin{remark}
If we drop the assumption $\dim(F \s A_i) < \rk(A_i)$ from the preceding theorem, the theorem may no longer hold true. 
For example, suppose that $\rk(A_i) = 2$ for $i \in \{1,2\}$, $\dim(F \s A_1) = 2$ and $\dim(F \s A_2) = 1$. By Corollary \ref{ten_dim}, $\dim( F \s A_1 \otimes_F F \s A_2) \ge 3$. But 
\[  \min\{ \dim(F \s A_1) + \rk(A_2) - 1, \dim(F \s A_2) + \rk(A_1) - 1  \} = \min\{3, 2\} = 2. \]
\end{remark}

\begin{remark}\label{codim0}
The hypotheses $\dim(F \s A_i) < \rk(A_i)$ is used only in the second part of the proof to show that $d \ne d_1 + r_2$.
The fact \[ \dim(F \s A_1 \otimes F \s A_2) \le \min\{\dim(F \s A_1) + \rk(A_2), \dim(F \s A_2) + \rk(A_1) \} \]
remains valid in the case $\dim(F \s A_i) = \rk(A_i)$ for some $i \in \{1,2\}$. 
\end{remark}
The next corollary shows that if atleast one of the algebras $F \s A_i$ is ``virtually commutative'' then the dimension is additive.

\begin{corollary}\label{codim}
Given twisted group algebras $F \s A_1$ and $F \s A_2$, 
if $\dim(F \s A_i) = \rk(A_i)$ for some $i \in \{1,2 \}$ then equality holds in  (\ref{ten_dim}).
\end{corollary}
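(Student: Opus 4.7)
The plan is to combine Corollary \ref{ten_dim} (which gives the lower bound $\dim(F \s A_1 \otimes F \s A_2) \ge \dim(F \s A_1) + \dim(F \s A_2)$) with the upper bound observed in Remark \ref{codim0} (which guarantees that $\dim(F \s A_1 \otimes F \s A_2) \le \min\{\dim(F \s A_1) + \rk(A_2),\ \dim(F \s A_2) + \rk(A_1)\}$ continues to hold in the present setting where the hypothesis $\dim(F \s A_i) < \rk(A_i)$ of Theorem \ref{ub_gen} fails). Once both bounds are in hand, the equality will follow by a short arithmetic comparison.

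In detail, set $r_i = \rk(A_i)$ and $d_i = \dim(F \s A_i)$, and assume without loss of generality that $d_1 = r_1$. From Remark \ref{codim0} I will extract the inequality
\[ \dim(F \s A_1 \otimes F \s A_2) \le \min\{d_1 + r_2,\ d_2 + r_1\}. \]
Using $d_1 = r_1$ and the general bound $d_2 \le r_2$, the two quantities under the minimum satisfy
\[ d_2 + r_1 \le r_2 + r_1 = d_1 + r_2, \]
so the minimum equals $d_2 + r_1 = d_1 + d_2$. Combined with $\dim(F \s A_1 \otimes F \s A_2) \ge d_1 + d_2$ from Corollary \ref{ten_dim}, this yields
\[ \dim(F \s A_1 \otimes F \s A_2) = d_1 + d_2, \]
as required.

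There is essentially no main obstacle here: the entire substantive content is already packaged inside Theorem \ref{ub_gen} and its surrounding remarks. The only subtle point is confirming that the first half of the proof of Theorem \ref{ub_gen}, which rules out $d > d_1 + r_2$ using only Lemma \ref{min_mod}, Proposition \ref{GKdim_prop} and Corollary \ref{min_dim}, never invoked the hypothesis $d_i < r_i$; that argument only uses the existence of a nonzero finitely generated $\mathcal A$-module of sufficiently small GK dimension and the restriction of its GK dimension along $F \s A_1$, so it carries over verbatim and justifies Remark \ref{codim0}. With that observation the proof is complete.
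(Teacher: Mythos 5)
Your proposal is correct and matches the paper's own argument: both combine the lower bound of Corollary \ref{ten_dim} with the upper bound $\min\{d_1 + r_2,\ d_2 + r_1\}$ from Remark \ref{codim0}, and observe that when $d_1 = r_1$ this minimum equals $d_1 + d_2$. Your extra check that the first half of the proof of Theorem \ref{ub_gen} never uses $\dim(F\s A_i) < \rk(A_i)$ is exactly the justification the paper's Remark \ref{codim0} relies on.
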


\begin{proof}
Without loss of generality, we may assume that $\dim(F \s A_i) = \rk(A_i)$. 
As in Remark \ref{codim0}, 
\begin{align*}
\dim(F \s A_1 \otimes F \s A_2) &\le \min\{\dim(F \s A_1) + \rk(A_2), \dim(F \s A_2) + \rk(A_1) \} \\ 
                                &= \min\{\rk(A_1) + \rk(A_2), \dim(F \s A_2) + \dim(F \s A_1) \} \\                     
                                &= \dim(F \s A_1) + \dim(F \s A_2).
\end{align*}
Hence equality holds in  (\ref{ten_ineq}).   
\end{proof}

Equality holds in (\ref{ten_ineq}) in the following situation also. 

\begin{corollary}\label{eq_case1}
Let $F \s A_1$ and $F \s A_2$ be twisted group algebras such that 
$\dim(F \s A_i) < \rk(A_i)$. If $\dim(F \s A_i) = \rk(A_i) - 1$ for some $i \in \{1,2\}$, then 
\[ \dim( F \s A_1 \otimes_F F \s A_2)  =  \dim(F \s A_1) + \dim(F \s A_2). \]
\end{corollary}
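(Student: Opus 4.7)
The plan is to sandwich $\dim(F \s A_1 \otimes_F F \s A_2)$ between the super-additivity lower bound of Corollary \ref{ten_dim} and the upper bound of Theorem \ref{ub_gen} (Theorem 5.7), and observe that under the codimension-one hypothesis the two bounds collide.

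Without loss of generality assume the hypothesis $\dim(F \s A_i) = \rk(A_i) - 1$ holds for $i = 1$. Set $r_i = \rk(A_i)$ and $d_i = \dim(F \s A_i)$. Since by assumption we also have $d_2 < r_2$, i.e.\ $d_2 \le r_2 - 1 = d_1 + r_2 - r_1$, we can rearrange this as
\[
d_2 + r_1 \le d_1 + r_2,
\]
so the minimum in (\ref{dim_bd}) is achieved by the first argument after swapping roles: more precisely, using $d_1 = r_1 - 1$,
\[
\min\{d_1 + r_2,\, d_2 + r_1\} = d_2 + r_1 = d_2 + d_1 + 1.
\]
Applying Theorem \ref{ub_gen} (whose hypotheses $d_i < r_i$ are exactly what we have assumed) therefore yields
\[
\dim(F \s A_1 \otimes_F F \s A_2) \le d_2 + d_1 + 1 - 1 = d_1 + d_2.
\]

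In the opposite direction, Corollary \ref{ten_dim} gives the super-additivity
\[
\dim(F \s A_1 \otimes_F F \s A_2) \ge d_1 + d_2.
\]
Combining the two inequalities produces the required equality.

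There is no real obstacle here: once Theorem \ref{ub_gen} is in hand, the content of the corollary is purely the arithmetic observation that the ``$-1$'' in (\ref{dim_bd}) exactly cancels the ``$+1$'' coming from $r_1 = d_1 + 1$. The only point requiring care is the reduction to the case $i = 1$ and the verification that the minimum in (\ref{dim_bd}) is attained at the factor $d_2 + r_1$; this is immediate from $d_2 \le r_2 - 1 = d_1 + r_2 - r_1$.
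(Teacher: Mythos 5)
Your proposal is correct and is essentially the same argument the paper gives: apply Theorem \ref{ub_gen} after noting that $d_1 = r_1 - 1$ and $d_2 \le r_2 - 1$ force the minimum in (\ref{dim_bd}) to be $d_2 + r_1 = d_1 + d_2 + 1$, then combine the resulting upper bound $d_1 + d_2$ with the super-additivity of Corollary \ref{ten_dim}.
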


\begin{proof}
Without loss of generality we assume that $\dim(F \s A_1) = \rk(A_1) - 1$. 
We set $r_i  = \rk(A_i)$,  $d_i = \dim(F \s A_i)$ for $i \in \{1,2\}$ and $d  = \dim(F \s A_1 \otimes_F F \s A_2)$.
By Theorem \ref{ub_gen}, 
\begin{align}
d &\le \min\{ d_1 + r_2, d_2 + r_1 \} -1  \\ 
  &= \min\{r_1 + r_2 - 1, d_2 + r_1\} - 1 \\
  &=  d_2 + r_1 - 1\\ 
  &= d_2 + d_1. 
\end{align}
On the other hand $d \ge d_1 + d_2$ by Corollary \ref{ten_dim}. Hence $d = d_1 + d_2$.

We recall our definition of $\codim(F \s A)$ given in Section \ref{intr} as \[ \codim(F \s A) = \rk(A) - \dim(F \s A). \]
With this the last two corollaries may be combined in the following form.

\begin{corollary}\label{ten_eq}
Given algebras $F \s A_1$ and $F \s A_2$ suppose that $\codim(F \s A_i) \le 1$ holds for some $i \in \{1,2\}$.
Then 
\[ \dim(F \s A_1 \otimes_F F \s A_2 ) = \dim(F \s A_1) + \dim(F \s A_2). \]
\end{corollary}

We recall that an algebra $\otimes_{i  = 1}^k F \s B_i$, where 
$B_i \cong \mathbb Z^2$ is known as the multiplicative analouge of the Weyl algebra~\cite{Ja}. As $1 \le \dim(F \s B_i) \le 2$ (Section \ref{intr}) we obtain the following.
\begin{corollary}
For any algebras $F \s B_j$, where $B_j \cong \mathbb Z^2$ and $j \in \{1, \cdots, s\}$,
\[ \dim(F \s B_1 \otimes F \s B_2 \otimes \cdots \otimes F \s B_s) = \sum_{j = 1}^s\dim(F \s B_j). \]  
 \end{corollary}


\subsection{The inequality becomes strict}

As already noted in the example in Section \ref{intr}, the upper bound in Theorem \ref{ub_gen} is attained for the tensor products in which 
each tensor factor has dimension one. It easily follows from Corollary \ref{ten_eq} that this upper bound is also attained in the case when both the (tensor) factors have codimension one.  

We shall now prove (see Theorem \ref{dim_<_ub} ) that in the case both the dimension and the codimension of each (tensor) factor exceeds one 
the inequality in Theorem \ref{ub_gen} is strict.

\begin{theorem}\label{dim_<_ub}
Let $F \s A_1$ and $F \s A_2$ be twisted group algebras such that
$\codim(F \s A_i) \ge 2$, $\dim(F \s A_i) \ge 2$ and $F \s A_i$ has center $F$.
Let $d : = \dim(F \s A_1 \otimes F \s A_2)$.
Then  
\begin{equation}\label{ub}
d < \min\{\dim(F \s A_1) + \rk(A_2), \dim(F \s A_2) + \rk(A_1) \} - 1. 
\end{equation}

\end{theorem}
  
\begin{proof}
Set $\mathcal A  =  F \s A_1 \otimes F \s A_2, n_i = \rk(A_i)$ and $d_i = \dim(F \s A_i)$, where $i \in \{1, 2\}$.
Let $ \{x_1, \cdots, x_{n_1} \}$ and $\{ y_1, \cdots, y_{n_2} \}$ be bases in $A_1$ and $A_2$ respectively.

Without loss of generality, we may assume that 
\[ \codim(F \s A_1) \ge \codim(F \s A_2). \] 
In other words,
$n_1 - d_1 \ge n_2 - d_2$ and so $n_1 + d_2 \ge n_2 + d_1$. 
Consequently the right side of (\ref{ub}) is 
$n_2 + d_1 - 1$. 
We must thus show that 
\begin{equation}\label{dim_<_inf}
 \dim(\mathcal A) < n_2 + d_1 - 1. 
\end{equation} 
But in view of Theorem \ref{ub_gen}, it suffices to show that   
\[ \dim(\mathcal A) \ne n_2 + d_1 - 1. \]  
To this end we suppose that 
\[ \dim(\mathcal A) = n_2 + d_1 - 1. \] 
 Applying Lemma \ref{min_mod}, $\mathcal A$ has a simple module $N$ with 
\[ \gk(N) = n_1 - d_1 + 1. \]
Using Proposition \ref{crit_mod},  let $N_1$ be a finitely generated critical $F \s A_1$-submodule of $N$ of the maximum possible GK dimension.
In view of Corollary \ref{min_dim} and 
Proposition \ref{GKdim_prop}, 
\begin{equation}\label{dim_val} 
n_1 - d_1 \le \gk(N_1) \le  n_1 - d_1 + 1. 
\end{equation}

We first assume  that 
\begin{equation}\label{low_dim_val}
\gk(N_1) = n_1 - d_1.
\end{equation} 
By Proposition \ref{GKdim_ch} and Remark \ref{loc_rmk}, $N_1$ and hence $N$ is not $F \s \langle x_1, x_2, \cdots, x_{n_1 - d_1} \rangle$-torsion. 
This means that we may localize $n_1 - d_1$ generators  say $x_1, \cdots, x_{n_1 - d_1}$ of $A_1$. 
We then obtain the crossed product 
\[ F(x_1, \cdots, x_{n_1 - d_1})[x_{n_1 - d_1 + 1}, \cdots, x_{n_1}, y_1, \cdots, y_{n_2}]. \]
Set $\hat {A}_2 = \langle x_{n_1 - d_1 + 1}, \cdots, x_{n_1}, y_1, \cdots, y_{n_2} \rangle$ and note that $\rk(\hat{A}_2) > \rk(A_2)$.   
By Lemma \ref{loc_GKdim},  the corresponding localization 
$N_{n_1 - d_1}$ of $N$ has GK dimension $1$ (over $F(x_1, \cdots, x_{n_1 - d_1})$) since
 \[ \gk(N) = n_1 - d_1 + 1. \]

Let $V$ be a carrier space of $\Delta^*(N_{n_1 - d_1})$. Then 
\[ \dim(V) = \gk(N_{n_1 - d_1}) = 1. \] 
Let $B_V$ be the isolated subgroup of $\hat A_2$ so that $V = B_V^\circ$.  Then $\rk(B_V) = \rk(\hat {A}_2) - 1$.

We note that $N_{n_1 - d_1}$ must be $F(x_1, \cdots, x_{n_1 - d_1})[x_j]$-torsion for each $j \in \{ n_1 - d_1 + 1, \cdots, n_1\}$ otherwise
$N$ would not be $F \s \langle x_1, \cdots, x_{n_1 - d_1}, x_j \rangle$-torsion. 
But this is a contradiction to (\ref{low_dim_val}) in view of Proposition \ref{GKdim_ch}.

Thus by Lemma \ref{css_p}, there exist nonzero integers $s_j$ so that 
$x_j^{s_j} \in B_V$ for all $j \in \{ n_1 - d_1 + 1, \cdots, n_1\}$.
Since $\rk(B_V) = \rk(\hat {A}_2) - 1$, hence 
\begin{align*} 
\rk(B_V \cap A_2) &\ge \rk(B_V) + \rk(A_2) - \rk(\hat {A}_2) \\ 
                  &= \rk(B_V) - \rk(\hat A_2) +  \rk(A_2) \\ 
                  &= \rk(A_2) - 1
\end{align*}
\end{proof}
But as noted above $x_j^{s_j} \in B_V$ where 
$j \in \{ n_1 - d_1 + 1, \cdots, n_1\}$ and so $\rk(B_V \cap A_1) = d_1$. Now  \[ 
\rk(B_V)\ge \rk(A_1 \cap B_V)+ \rk(A_2 \cap B_V) \ge  d_1 + \rk(A_2) - 1 = \rk(B_V) \]  
and hence $\rk(B_V \cap A_2) = \rk(A_2) - 1$. There will be no harm in assuming that $y_1, \cdots, y_{n_2 - 1}$  generate $B_V \cap A_2$ as our arguments remain valid in passing to a subgroup of $A_2$ of finite index (Theorem \ref{dim_sup}, Propositions \ref{c_a_fis} and Corollary \ref{dim_fi_sg}).
By \cite[Theorem 3]{Gr2001},
the crossed product 
\[ F(x_1, \cdots, x_{n_1 - d_1})[x_{n_1 - d_1 + 1}^{s_{n_1 - d_1 + 1}}, \cdots, x_{n_1}^{s_{n_1}}, y_1, \cdots, y_{n_2} ] \] 
has a nonzero module finite dimensional over $F(x_1, \cdots, x_{n_1 - d_1})$.
We can thus apply Theorem \ref{com_mon} noting remark \ref{eq_hyp}. 
Again  we may assume $s_i = 1$ in Theorem \ref{com_mon}.    
We thus obtain 
\begin{align}\label{cmt_3}
[\bar {a}_i\bar {x}_{i}^{s_i}, \bar {a}_j \bar {x}_{j}^{s_j}] &= 1  \ \ \ \ \ \forall i,j \in \{ n_1 - d_1 +1, \cdots, n_1 \},\\  \label{cmt_4} 
[\bar {a}_i \bar {x}_{i}^{s_i}, \bar {b}_j \bar {y}_j] &= 1  \ \ \ \ \ \forall i \in \{ n_1 - d_1 +1, \cdots, n_1 \}, j \in \{ 1, \cdots, n_2 - 1 \},\\ \label{cmt_5} 
[\bar {b}_i \bar{y}_i, \bar {b}_j \bar {y}_j  ] &= 1 \ \ \ \ \ \forall i,j \in \{ 1, \cdots, n_2 - 1 \}, 
\end{align}
where $a_i, b_j \in  \langle x_1, \cdots, x_{n_1 - d_1} \rangle$.
But (\ref{cmt_4}) yields:
\begin{equation}\label{cmt_6}
[a_ix_{i}^{s_i}, \bar {b}_j] = 1  \ \ \ \ \forall i \in \{ n_1 - d_1 +1, \cdots, n_1 \}, j \in \{ 1, \cdots, n_2 - 1 \}.
\end{equation}
 
By definition, $d_1 = \dim(F \s A_1)$ and so $\bar {b}_j  = 1$ for all $j$, otherwise  by (\ref{cmt_3}) and (\ref{cmt_6}), there is a subgroup $B_1 < A_1$ with 
$\rk(B_1) \ge  d_1 + 1$ so that $F \s B_1$ is commutative. 
But this is impossible by Theorem \ref{dim_sup}. 
Hence $b_j = 1$ for all $j \in \{ 1, \cdots, n_2 - 1. \}$.
It then follows from (\ref{cmt_5}) that 
\[ [ \bar{y}_i, \bar {y}_j  ] = 1 \ \ \ \ \ \forall i,j \in \{ 1, \cdots, n_2 - 1 \}. \] 
Hence $A_2$ has a subgroup $B_2$ with $\rk(B_2) = n_2 - 1$ so that 
$F \s B_2$ is commutative. This is contrary to the hypothesis in the theorem that 
$\dim(F \s A_2) < n - 1$.  

In view of (\ref{dim_val}), we may now assume that  $\gk(N_1) = n_1 - d_1  + 1$.
By Proposition \ref{GKdim_ch} (and the succeeding paragraph) this implies that the ring 
\[ F(x_1, \cdots, x_{n_1 - d_1 + 1})[x_{n_1 - d_1 + 2}, \cdots, x_{n_1}, y_1, \cdots, y_{n_2}] \] 
has a nonzero module finite dimensional over the division ring $F(x_1, \cdots, x_{n_1 - d_1 + 1})$. For the sake of convinience, we may assume that 
\begin{equation}\label{ins_comtr_1}
[\bar {y}_k, \bar {y}_l] = 1  \ \ \ \ \  \forall k,l \in \{ 1, \cdots, d_2 \} 
\end{equation}
noting Theorem \ref{dim_sup}, Propositions \ref{c_a_fis} and Corollary \ref{dim_fi_sg}.  
We may apply Theorem \ref{com_mon} noting Remark \ref{eq_hyp}.
We thus have the following relations.
\begin{align}\label{comt_3}
[\bar {a}_i\bar {x}_{i}, \bar {a}_j \bar {x}_{j}] &= 1  \ \ \ \ \ \forall i,j \in \{ n_1 - d_1 + 2, \cdots, n_1 \},\\  \label{cmt_4} 
[\bar {a}_i \bar {x}_{i}, \bar {b}_j \bar {y}_j] &= 1  \ \ \ \ \ \forall i \in \{ n_1 - d_1 + 2, \cdots, n_1 \}, j \in \{ 1, \cdots, n_2  \},\\ \label{cmt_5} 
[\bar {b}_i \bar{y}_i, \bar {b}_j \bar {y}_j  ] &= 1 \ \ \ \ \ \forall i,j \in \{ 1, \cdots, n_2 \}, 
\end{align}
where $a_i, b_j \in  \langle x_1, \cdots, x_{n_1 - d_1 + 1} \rangle$.
But as $\mathcal A$ is a tensor product, (\ref{ins_comtr_1}) and (\ref{cmt_5}) together mean that 
\begin{equation}\label{ins_cmtr_2}
[\bar {b}_k, \bar {b}_l] =  1 \ \ \ \ \ \ \forall k,l \in  \{ 1, \cdots, d_2 \}
\end{equation}
On the other hand (\ref{cmt_4}) yields:
\begin{equation}\label{comt_4}
[\bar {a}_i \bar {x}_{i}, \bar {b}_j] = 1  \ \ \ \ \ \forall i \in \{ n_1 - d_1 + 2, \cdots, n_1 \}, j \in \{ 1, \cdots, n_2  \}.
\end{equation}
Now the $\bar {a}_i \bar {x}_{i}$ are $d_1 - 1$ independent commuting monomials all of which commute with the $\bar {b}_j$. But as $\{\bar {b}_1, \cdots, \bar {b}_{d_2} \}$ commute mutually, we must have   
\[ \rk(\langle b_1, \cdots, b_{d_2} \rangle) = 1. \]
For otherwise we would obtain a contradiction to Theorem \ref{dim_sup} (applied to $F \s A_1$).

As $d_2 \ge 2$ by the hypothesis in the theorem, hence $b_1$ and $b_2$ are mutually dependent. We may thus find integers $t_1$ and $t_2$ 
so that $b_1^{t_1} = b_2^{t_2}$. Pick $j$ so that  $d_2 + 1 \le j \le n_2$.
We have from (\ref{cmt_5}):
\[ [b_1, b_j][y_1, y_j] = 1 \] 
and so \[ [b_1^{t_1},b_j][y_1^{t_1}, y_j] = 1. \] 
Similarly 
 \[ [b_2^{t_2},b_j][y_2^{t_2}, y_j] = 1. \] 
whence combining the last two equations we get
\[ [y_1^{t_1}y_2^{-t_2}, y_j] = 1 \] 
But this means that $F \s A_2$ has center larger than $F$ contrary to the hypothesis in the theorem.
This concludes our proof.
\end{proof}

\begin{corollary}\label{add_2}
Suppose that the algebras  $F \s A_1$ and $F \s A_2$ satisfy the following conditions
\begin{itemize}
\item[(i)] $\dim(F \s A_1),\dim(F \s A_2) \ge 2$,
\item[(ii)] $\codim(F \s A_1) \ge \codim(F \s A_2) = 2$, 
\item[(iii)] $F \s A_i$ has center $F$.  
\end{itemize}
Then \[ \dim(F \s A_1 \otimes F \s A_2) = \dim(F \s A_1) + \dim(F \s A_2) \] 
\end{corollary}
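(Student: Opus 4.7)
The plan is to derive this corollary directly from Theorem 5.14 combined with the lower bound in Corollary 5.8. First, I would verify that the hypotheses of Theorem 5.14 are satisfied by $F*A_1$ and $F*A_2$: by (i), both algebras have dimension at least $2$; by (ii), $\codim(F*A_2)=2$ and $\codim(F*A_1)\ge 2$, so both have codimension at least $2$; and (iii) gives the center condition. Thus Theorem 5.14 applies.

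Next, I would compute what the min-expression in the upper bound of Theorem 5.14 actually is under the codimension hypothesis. Writing $d_i := \dim(F*A_i)$ and $r_i := \rk(A_i)$, the assumption $\codim(F*A_2)=2$ means $r_2 = d_2 + 2$, so
\begin{equation*}
d_1 + r_2 = d_1 + d_2 + 2.
\end{equation*}
Similarly $d_2 + r_1 = d_1 + d_2 + \codim(F*A_1) \ge d_1 + d_2 + 2$ by the other half of (ii). Hence the minimum is $d_1 + d_2 + 2$, and Theorem 5.14 yields
\begin{equation*}
d := \dim(F*A_1 \otimes F*A_2) < (d_1 + d_2 + 2) - 1 = d_1 + d_2 + 1,
\end{equation*}
so $d \le d_1 + d_2$.

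Finally, the reverse inequality $d \ge d_1 + d_2$ is exactly Corollary 5.8 (the super-additivity of the dimension, which follows from Brookes' Theorem 5.1 applied to the commutative subalgebras of $F*A_1$ and $F*A_2$ whose ranks realize $d_1$ and $d_2$). Combining the two inequalities gives $d = d_1 + d_2$. There is essentially no obstacle here since the result is a mechanical combination of an upper and a lower bound that have already been established; the only thing to watch is confirming that the $\min$ in Theorem 5.14 reduces to the expression involving $\codim(F*A_2)$, which is immediate from the assumption $\codim(F*A_1) \ge \codim(F*A_2)$.
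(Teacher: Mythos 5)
Your proposal is correct and is essentially the paper's own argument: apply Theorem 5.14, use $\codim(F \s A_2) = 2$ together with $\codim(F \s A_1) \ge \codim(F \s A_2)$ to see that the minimum in the upper bound equals $d_1 + d_2 + 2$, deduce $d \le d_1 + d_2$, and combine with the super-additivity lower bound. The only slip is a reference label: the super-additivity statement is Corollary 5.3 (equivalently inequality (2) of the introduction), not Corollary 5.8, but this does not affect the mathematics.
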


\begin{proof}
Let $T: = F \s A_1 \otimes F \s A_2$, $n_i = \rk(A_i)$ and $d_i = \dim(F \s A_i)$, where $i \in \{1,2\}$.   
Hypothesis (ii) in the theorem means that $n_1 - d_1 \ge n _2 - d_2$ and thus $n_2 + d_1 \le n_1 + d_2$.   
By Theorem \ref{dim_<_ub} we get   
\begin{align*}
\dim(T) &<  \min\{d_1 + n_2, d_2 + n_1 \} - 1 \\
        &=  \dim(F \s A_1) + \rk(A_2) - 1 \\
         &=  \dim(F \s A_1) + \dim(F \s A_2) + 1.
\end{align*}
The corollary now follows from Corollary \ref{ten_dim}. 
 \end{proof}

\end{document}